\documentclass[11pt]{amsart}
\usepackage{amscd,amssymb,amsmath}
\pagestyle{headings}
\textwidth = 6.5 in \textheight = 8.5 in \oddsidemargin = 0.0 in
\evensidemargin = 0.0 in \topmargin = 0.0 in \headheight = 0.0 in
\headsep = 0.5 in
\parskip = 0.1in
\parindent = 0.0in

 \newtheorem{theorem}{Theorem}[section]

 \theoremstyle{definition}

 \newtheorem{proposition}[theorem]{Proposition}
 \newtheorem{corollary}[theorem]{Corollary}

 \theoremstyle{remark}

 \numberwithin{equation}{section}
 \linespread{1.2}

\begin{document}

\title[Involutions on Sol 3-manifolds]{Involutions on sapphire Sol 3-manifolds and the Borsuk-Ulam theorem for maps into $R^n$}

\subjclass{Primary 55M20; Secondary 57N10, 55M35, 57S25}

\keywords{Sol 3-manifolds, involutions, covering space, torus bundles over \(S^1\), Borsuk-Ulam theorem, Reidemeister Schreier method}

  \author[A. P. Barreto]{Alexandre Paiva Barreto}
 \address{Departamento de Matem\'atica\\
 Universidade Federal de S\~ao Carlos\\
 S\~ao Carlos, 13565-905, Brazil}
 \email{alexandre@dm.ufscar.br}
\thanks{This work was initiated during the fixed point conference in China June 2011.  This project was supported in part by Projeto Tem\'atico Topologia Alg\'ebrica Geom\'etrica e Diferencial 2012/24454-8.}

 \author[D. L. Gon\c calves]{Daciberg Lima Gon\c calves}
 \address{Departamento. de Matem\'atica\\
  - IME - USP, Caixa Postal 66.281\\
   CEP 05314- 970,FAX: 55-11-30916183\\
     S\~ao Paulo - SP, Brasil }
 \email{dlgoncal@ime.usp.br}

 \author[D. Vendr\'uscolo]{Daniel Vendr\'uscolo}
  \address{Departmento de Matem\'atica\\
 Universidade Federal de S\~ao Carlos\\
 S\~ao Carlos, 13565-905, Brazil}
 \email{daniel@dm.ufscar.br}

 \newcommand{\af}{\alpha}
 \newcommand{\et}{\eta}
 \newcommand{\ga}{\gamma}
 \newcommand{\ta}{\tau}
 \newcommand{\ph}{\varphi}
 \newcommand{\bt}{\beta}
 \newcommand{\lb}{\lambda}
 \newcommand{\wh}{\widehat}
 \newcommand{\sg}{\sigma}
 \newcommand{\om}{\omega}
 \newcommand{\cH}{\mathcal H}
 \newcommand{\cF}{\mathcal F}
 \newcommand{\N}{\mathcal N}
 \newcommand{\R}{\mathbb R}
 \newcommand{\Z}{\mathbb Z}
 \newcommand{\Ga}{\Gamma}
 \newcommand{\cc}{\mathcal C}
 \newcommand{\bea} {\begin{eqnarray*}}
 \newcommand{\beq} {\begin{equation}}
 \newcommand{\bey} {\begin{eqnarray}}
 \newcommand{\eea} {\end{eqnarray*}}
 \newcommand{\eeq} {\end{equation}}
 \newcommand{\eey} {\end{eqnarray}}
 \newcommand{\ovl}{\overline}
 \newcommand{\vv}{\vspace{4mm}}
 \newcommand{\lra}{\longrightarrow}

\begin{abstract}
  For each sapphire Sol 3-manifold, which is not a torus bundle over the circle, we classify the free  involutions. Then we classify the  triple $(M, \tau; R^n)$ where $M$ is a sapphire Sol 3-manifold as above, $\tau$ is a free involution and $n$ a positive integer, for which the Borsuk-Ulam property holds. It is known that for $n>3$, the validity of the Borsuk-Ulam property  is  of the involution, so the main classification is for $n=2$ and $3$.
\end{abstract}

\maketitle

\section{Introduction}

The classical Borsuk-Ulam Theorem states that, for any continuous map $f:S^n\to \R^n$, there exists a point $x\in S^n$ such that $f(x)=f(-x)$. This theorem has motivated the following quite natural and general question.  Given a topological space $M$, a free involution $\tau$ on $M$, and a positive  integer,  we say that {\it the Borsuk-Ulam property holds for the  triple $(M,\tau,\R^n)$ ( or the  triple $(M,\tau,\R^n)$ satisfies the  Borsuk-Ulam property),  if for any continuous map $f:M\to \R^n$, there exists a point $x\in X$ such that $f(x)=f(\tau(x))$}. The question consists in classifying the triples  $(M,\tau,\R^n)$ for which  the  Borsuk-Ulam property holds. 

 The above  question has been studied by several people, see for example \cite{Gon, GonGua, GSM} among others. The classification of the free involutions of a space   is closely related to the above question above and  is a problem in its own right. For many  Seifert $3$-manifolds, the classification of involutions, together with the study of the Borsuk-Ulam property can be found in  \cite{HaGoZv},  \cite{BauHaGoZv1} and  \cite{BauHaGoZv2}.

The purpose of this work is to classify all free involutions  on a closed sapphire Sol manifold $M$, up to an equivalence relation (see section $3$), as well the  triples $(M, \tau, \R^n)$ which satisfy  the  Borsuk-Ulam property. 

The main results are:
 
{\bf Proposition} \ref{r2}. Any triple  $(M, \tau; \R^2)$ has the Borsuk-Ulam property.
 
{\bf Theorem} \ref{main}.  Given a sapphire 3-manifold determined by the matrix

$$\begin{pmatrix}
           a & b \\
           c & d
           \end{pmatrix}$$
\noindent we have:

I) If $c$ is odd the manifold does not admit involutions.

II) If $c,b$ are even we have two cases:

II-a) $|a|\neq |d|$, then the manifold admits one class of free involutions and the quotient is homeomorphic to the sapphire determined by the matrix
$$\begin{pmatrix}
           a & 2b \\
         c/2 & d
           \end{pmatrix}.$$

II-b) $|a|=|d|$, then the manifold admits 3 distinct classes of involutions and the quotients are the sapphires manifolds given by the matrices:
$$\begin{pmatrix}
           a & 2b \\
         c/2 & d,
           \end{pmatrix},\qquad 
           \begin{pmatrix}
           r & s \\
           t & u,
           \end{pmatrix},\qquad
           \begin{pmatrix}
           s & r \\
           u & t
           \end{pmatrix},$$
\noindent where $(r,s,t,u)$ is one of the solutions given by
Proposition \ref{SapphireDown1}, if  $r\ne u$ and $s\ne t$. Otherwise  the manifold admits at least three distinct classes of involutions and 
at most five.

III) If $c$ is even and $b$ odd, then the manifold admits one class of free involutions and the quotient is homeomorphic to the sapphire determined by the matrix
$$\begin{pmatrix}
           a & 2b \\
         c/2 & d
           \end{pmatrix}.$$

{\bf Corollary} \ref{bumain}.  Given a sapphire 3-manifold $S_A$ determined by the matrix

$$\begin{pmatrix}
           a & b \\
           c & d
           \end{pmatrix}$$
\noindent the Borsuk-Ulam property holds for the triple $(S_A, \tau, \R^3)$ if and only if $c$ is even and $b$ is odd.
Furthermore, in this case the manifold admits only one class of free involutions.  

This paper contains 3 sections besides this introduction.  
 
In section 2 we recall the definition and a classification for the closed sapphire Sol 3-manifolds, as well those which are not torus bundles. They are classified in terms of certain integral matrices in $GL(2,\Z)$ by a result of  K. Morimoto in \cite{morimoto}. 

In section 3, we determine all the double coverings of a given sapphire Sol manifold in terms of the classification given by Morimoto. Then we obtain the equivalence classes of  the double coverings.   

In section 4 we provide the classification of the  involutions. Then we give the   classification of all triples  $(M, \tau, \R^2)$ and  $(M, \tau, \R^3)$ for which  the  Borsuk-Ulam property holds. The latter case uses a recent result by J. Hillman, \cite{hillman}.

A great amount of information about involutions (not necessarily free) on torus bundles is given  in the work of Sakuma \cite{sakuma}. That work might be useful to study similar  questions, such  as that studied  here,  for torus bundles.  This is  work in progress and it is not clear what type of results will appear, especially if we wish to obtain  answers in the same spirit as those given in this paper.  It is likely that they will  not be  so explicit as those  for the  sapphire.

To conclude we would like  to comment that for some special Sol manifolds, the manifold admits two involutions, and  we do not know if they are equivalent. See more about this at the end of section $3$.
 
\section{Sol $3$-manifolds}

The Sol 3-manifolds constitutes one of the eight geometries mentioned in the geometrization conjecture of Thurston. Using either \cite{SWW} or \cite{morimoto}  we have that these manifolds can be divided into two disjoint subfamilies:
\begin{itemize}
\item[a)] Torus bundles where the gluing map is an Anosov map;
\item[b)] A subset of the sapphire manifolds (called torus semi-bundles in \cite{SWW}) 
which are  those where the gluing automorphisms are the automorphism of $\Z+\Z$ where $rstu\ne 0$ (see the definition below).
\end{itemize}

In the case of torus bundles, two of them are homeomorphic if and only if the gluing matrices are conjugated or one is conjugated to the inverse of the other, see \cite{sakuma}. Let
$A=\begin{pmatrix}
          m & n \\
          p & q
        \end{pmatrix}$ be the matrix of the Anosov map used to construct the torus bundles $T_A$, so the fundamental group of such Sol 3-manifold admits the following presentation:
\begin{equation}\label{pi1torusbundle}
\pi_1(T_A)\cong <a,b,c\ |\ aba^{-1}b^{-1}, cac^{-1}b^{-p}a^{-m}, cbc^{-1}b^{-q}a^{-n}>.
\end{equation}
\noindent For the above  facts  and more details about torus bundles, see \cite{sakuma}.

In the case of Sol $3$-manifolds which are not torus bundles, recall from K. Morimoto \cite{morimoto} that a {\it sapphire} manifold is obtained by gluing two orientable 
twisted $I$-bundles over the Klein bottle as follows. For $i=1,2$, let $K_iI$ be two copies of the same orientable twisted $I$-bundles over the Klein bottle with $\pi_1(\partial K_iI)\cong \langle \af_i,\bt_i|\af_i\bt_i=\bt_i\af_i\rangle$.
Let $A=\begin{pmatrix}
          r & s \\
          t & u
        \end{pmatrix}$ be an element of $GL_2(\Z)$ and $\phi:\partial K_2I\to \partial K_1I$ be a homeomorphism that induces isomorphism $A=\phi_{\#}:\pi_1(\partial K_2I)\to \pi_1(\partial K_1I)$ so that $\phi_{\#}(\af_2)=r\af_1+s\bt_1$ and $\phi_{\#}(\bt_2)=t\af_1+u\bt_1$.
By identifying $x\in \partial K_2I$ with $ \phi(x) \in \partial K_1I$, we obtain a closed orientable $3$-manifold $S_{A}=K_1I\cup_\phi K_2I$, wich is the {\it sapphire}. The sapphire manifolds which are Sol 3-manifolds are, by  Proposition 1.5 \cite{SWW} (or Remmark 1.8 \cite{morimoto}) correspond to those for which  the gluing matrix has all entries non zero, i.e.   $rstu\ne 0$.

 A presentation of the fundamental group is given by Morimoto: 
\begin{equation}\label{pi1sapphire}
\pi_1(S_{A})\cong \langle a,b,c\ |aba^{-1}b= c^{2}a^{-2r}b^{-s}=ca^{2t}b^{u}c^{-1}a^{2t}b^{u}=1 \rangle,
\end{equation}
so we can also compute the first homology group. 

 In \cite{morimoto} Morimoto also describes when two gluing matrices produce the same manifold up to homeomorphisms. The topological types of sapphire spaces are given by Theorem 1 in \cite{morimoto}, namely:

\begin{theorem}[Morimoto]\label{morimoto} Let $A$, $A'$ be two elements of $GL(2,\Z)$. Then $N_A$ is homeomorphic to $N_{A'}$  if and only if $A'$ is equal to one of the following matrices: $\pm  A^{\pm}$, $\pm  B  A^{\pm}$ $\pm  A^{\pm}B$, $\pm  BA^{\pm}B$, where
 $B=\begin{pmatrix}
          1 & 0 \\
          0 & -1
        \end{pmatrix}.$
\end{theorem}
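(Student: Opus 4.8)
The plan is to cut $S_A$ and $S_{A'}$ along the gluing torus, to argue that any homeomorphism can be isotoped so as to respect this cut, and then to reduce the classification to a computation with the mapping class group of $KI$, the orientable twisted $I$-bundle over the Klein bottle. First, write $S_A=K_1I\cup_\phi K_2I$ and put $T=\partial K_1I=\partial K_2I$. Under the inclusion, $\pi_1(\partial K_iI)$ is the unique index-two subgroup of the Klein bottle group $\pi_1(K_iI)$ that is isomorphic to $\Z^2$ (equivalently, the orientation subgroup $\ker w_1$), hence it is characteristic; in the notation of (\ref{pi1sapphire}) it is $\langle a^2,b\rangle$ with basis $(\af_1,\bt_1)=(a^2,b)$ for $i=1$, and $\langle c^2,\,a^{2t}b^u\rangle$ with basis $(\af_2,\bt_2)=(c^2,a^{2t}b^u)$ for $i=2$, and in these bases the gluing isomorphism $\phi_\#$ is the matrix $A$ itself. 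Since $T$ is incompressible, $\pi_1(S_A)$ is the amalgam $\pi_1(K_1I)\ast_{\Z^2}\pi_1(K_2I)$; and since $S_A$ is an aspherical Haken $3$-manifold, Waldhausen's theorem lets one replace the homeomorphism problem by the classification of these amalgamated products up to isomorphism --- equivalently, by the study of homeomorphisms $S_A\to S_{A'}$ that carry $T$ onto $T'$.

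The crucial ingredient is that the image of the boundary-restriction homomorphism $\mathrm{MCG}(KI)\to GL(2,\Z)$, written in the basis $(\af,\bt)$ above, is exactly $\{\pm I,\pm B\}$, where $B$ is the matrix of the statement. For the inclusion ``$\supseteq$'', the automorphisms $\mathrm{id}$, $(a\mapsto a,\ b\mapsto b^{-1})$, $(a\mapsto a^{-1},\ b\mapsto b)$ and $(a\mapsto a^{-1},\ b\mapsto b^{-1})$ of the Klein bottle group each preserve the peripheral subgroup $\langle a^2,b\rangle$, hence are realized by self-homeomorphisms of $KI$, and they act on $(\af,\bt)$ by $I$, $B$, $-B$ and $-I$ respectively, using $a^2\mapsto a^{\pm2}$ and $b\mapsto b^{\pm1}$. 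For ``$\subseteq$'', every automorphism of the Klein bottle group is of the form $a\mapsto a^{\pm1}b^{k}$, $b\mapsto b^{\pm1}$ (the cyclic subgroup $\langle b\rangle$ is characteristic with quotient $\Z$), so by $(a^{\pm1}b^{k})^2=a^{\pm2}$ it acts on $\langle a^2,b\rangle$ through a diagonal matrix with entries $\pm1$; the basepoint (inner) ambiguity contributes only conjugation by $a$, that is, multiplication by $\mathrm{diag}(1,-1)=B$, which keeps one inside $\{\pm I,\pm B\}$.

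To assemble: a homeomorphism $f\colon S_A\to S_{A'}$ with $f(T)=T'$ either preserves the two $I$-bundle pieces or interchanges them. If it preserves them, its restrictions induce matrices $g_1,g_2\in\{\pm I,\pm B\}$ on the two boundary tori, and compatibility with the gluings $\phi$ and $\phi'$ forces $A'g_2=g_1A$, i.e.\ $A'=g_1Ag_2^{-1}$; letting $g_1,g_2$ range over $\{\pm I,\pm B\}$ (and using $B^{-1}=B$) gives precisely $A'\in\{\pm A,\pm AB,\pm BA,\pm BAB\}$. If it interchanges the pieces, the gluing of $S_{A'}$ is traversed in the opposite direction, and one gets $A'h_1A=h_2$ with $h_1,h_2\in\{\pm I,\pm B\}$, i.e.\ $A'=h_2A^{-1}h_1^{-1}$, so $A'\in\{\pm A^{-1},\pm A^{-1}B,\pm BA^{-1},\pm BA^{-1}B\}$. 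Together the two lists are exactly $\pm A^{\pm}$, $\pm BA^{\pm}$, $\pm A^{\pm}B$, $\pm BA^{\pm}B$. For the converse one runs this backwards: every matrix in that list has one of the two shapes above, so one glues self-homeomorphisms of the two copies of $KI$ realizing the prescribed boundary matrices and then corrects by a collar isotopy so that the two identifications agree exactly; this builds a homeomorphism $S_A\to S_{A'}$.

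The step that needs genuine $3$-manifold input --- and the main obstacle --- is the reduction ``every homeomorphism is isotopic to one carrying $T$ onto $T'$''. In the situation used in this paper $S_A$ is a Sol manifold ($rstu\ne0$), hence neither Seifert fibred nor hyperbolic, whereas cutting it along $T$ yields two Seifert pieces; thus $\{T\}$ is the JSJ torus family, which is unique up to isotopy, and every homeomorphism respects it up to isotopy. For a general $A\in GL(2,\Z)$, though, $T$ may fail to be canonical --- for example if the two Seifert fibrations match across $T$ and $S_A$ is itself Seifert fibred --- and those remaining sapphire types must be handled by separate and more delicate arguments; that case analysis is the real content of Morimoto's proof. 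A secondary but real subtlety is the bookkeeping required to choose the bases $(\af_i,\bt_i)$ (and the orientation conventions) so that the restriction group is literally $\{\pm I,\pm B\}$ rather than a conjugate of it, since the exact form of the list depends on this.
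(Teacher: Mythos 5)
The paper itself offers no proof of this statement: it is quoted as Theorem 1 of \cite{morimoto}, so the only question is whether your argument would actually establish it. Your outline is the standard one and most of its ingredients are correct: the peripheral $\Z^2$ in the Klein bottle group is characteristic, automorphisms $a\mapsto a^{\pm1}b^k$, $b\mapsto b^{\pm1}$ act on $\langle a^2,b\rangle$ by $\mathrm{diag}(\pm1,\pm1)$, so the boundary image of the mapping class group of $KI$ is $\{\pm I,\pm B\}$; the compatibility equations $g_1A=A'g_2$ (side-preserving) and $A'h_1A=h_2$ (side-swapping) then yield exactly the sixteen matrices $\pm A^{\pm}$, $\pm BA^{\pm}$, $\pm A^{\pm}B$, $\pm BA^{\pm}B$, and the converse direction by gluing homeomorphisms of the two copies of $KI$ is unproblematic.

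The genuine gap is the one you name yourself: the reduction that any homeomorphism $S_A\to S_{A'}$ can be isotoped to carry the splitting torus $T$ onto $T'$. You justify it only when $S_A$ has Sol geometry (all entries of $A$ nonzero), where $T$ is, up to isotopy, the unique incompressible torus and is preserved by every homeomorphism. But the theorem as stated is for arbitrary $A,A'\in GL(2,\Z)$, which includes matrices with a zero entry; there $N_A$ is Seifert fibred (for instance the double of $KI$), vertical incompressible tori need not be unique up to isotopy, and your argument gives no control over where $T$ is sent. Deferring exactly this case analysis ``to Morimoto'' is circular when the statement being proved is Morimoto's theorem, so as written you have proved only the special case $rstu\ne 0$ (which, to be fair, is the only case this paper uses). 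Secondarily, the facts you invoke without proof --- incompressibility and isotopy-uniqueness of $T$ in the Sol case, and realization of every matrix in $\{\pm I,\pm B\}$ by a self-homeomorphism of $KI$ --- are standard (see for instance \cite{SWW}) but should be cited or argued, and the basis bookkeeping you flag (choosing $(\af_i,\bt_i)$ so that the restriction image is literally $\{\pm I,\pm B\}$) does need to be carried out for the list to come out in exactly the stated form.
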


 By abuse of notation, from now on we say that {\it a manifold is sapphire if it is a Sol-manifold}.
  
Using the classification given by the above theorem, if $S_A$ is a sapphire manifold  we may suppose that $A=\begin{pmatrix}
          r & s \\
          t & u
        \end{pmatrix}$ is such that all four numbers $r,s,t $ and $u$ are positive, and moreover $r\le u$.

The first homology group with integral coefficients $\Z$ of a sapphire may be obtained from the above presentation,  and it is explicitly  given  in \cite[Proposition 1.6]{morimoto}. The   family of the sapphires (which are not torus bundles and have geometry Sol), can be divided into two subfamilies. In the first subfamily we consider the sapphires having $H_1$(first homology group with integral coefficients)  isomorphic to $\Z_{4t}+\Z_4$. This corresponds to  $s$ being odd in the gluing map. In the second subfamily, we consider the sapphire having $H_1$  isomorphic to $\Z_{4t}+\Z_2+\Z_2$. This corresponds to $s$ being even.

\section{The classification of the double covering of a sapphire Sol manifold $M$}

In this section,  we will study the double covering of a sapphire Sol manifold. We will describe the double coverings using the kernel of nontrivial epimorphisms from $\pi_1(M)$ to $\mathbb{Z}_2$. In terms of the presentation given in section \ref{pi1sapphire},  we have to describe the image of $a$, $b$ and $c$, so we will have 7 cases:

\subsubsection*{Case I: $\varphi_1(a)=\overline{1}$, $\varphi_1(b)=\overline{0}$, $\varphi_1(c)=\overline{0}$}

Making use  of  the Reidemeister-Schreier method, we take $\{1,a\}$ as a Schreier system for right cosets of the kernel ($H$) of $\varphi_1$,  we obtain  $\{b,c,a^{2},aba^{-1}, aca^{-1}\}$ as a system of generators and the following relations:

\begin{itemize}
\item $aba^{-1}b$;
\item $c^{2}a^{-2r}b^{-s}$;
\item $ca^{2t}b^{u}$;
\item $ba^{-1}ba$;
\item $ a^{-1}c^{2}a^{-2r}b^{-s}a$;
\item $ a^{-1}ca^{2t}b^{u}c^{-1}a^{2t}b^{u}a$.
\end{itemize}

Now we make $\alpha=b,\beta=c,\gamma=a^{2},\delta=aba^{-1},\lambda=aca^{-1}$ and we obtain

$$H=\left\langle
\begin{array}
[c]{c}%
 \alpha,\beta,\gamma,\delta,\lambda\ |\ \delta\alpha=\beta
^{2}\gamma^{-r}\alpha^{-s}=\beta\gamma^{t}\alpha^{u}\beta^{-1}\gamma^{t}%
\alpha^{u}=\alpha\gamma^{-1}\delta\gamma=\\
=\lambda^{2}\gamma^{-r}\delta
^{-s}=\lambda\gamma^{t}\delta^{u}\lambda^{-1}\gamma^{t}\delta^{u}%
=1
\end{array}
\right\rangle.$$

Setting $a_1=\beta$, $b_1=\alpha^{u}\gamma^{t}$ and $c_1=\lambda$ we will have:
$$ H=< a_1,b_1,c_1\ |\ a_1b_1a_1^{-1}b_1=c_1^{2}a_1^{-2R}b_1^{S}= c_1a_1^{2T}b_1^{U}c_1^{-1}a_1^{2T}b_1^{U}=1>$$
where
\begin{itemize}
\item $R=ru+st,\ S=-2rs,\ T=2tu,\ U=-(ru+st)$ if $ru-st=1$;
\item $R=-(ru+st),\ S=2rs,\ T=-2tu,\ U=ru+st$ if $ru-st=-1$.
\end{itemize}

So the double covering defined by $\varphi_1$ is the sapphire determined by the matrix
$\begin{pmatrix}
          ru+st & -2rs \\
          2tu & -(ru+st)
        \end{pmatrix}$
if $ru-st=1$, or the sapphire determined by
$\begin{pmatrix}
          -(ru+st) & 2rs \\
          -2tu & ru+st
        \end{pmatrix}$ if $ru-st=-1$. Moreover it is clear that both matrices describe the same 3-manifold.

\subsubsection*{Case II: $\varphi_2(a)=\overline{0}$, $\varphi_2(b)=\overline{1}$, $\varphi_2(c)=\overline{0}$}

First of all we note that in this case (and in all other cases with nontrivial image of $b$),  $s$ must be even and $u$ odd, we denote $s=2k$ and $u=2l-1$. Taking $\{1,b\}$ as a Schreier system for right cosets of the kernel ($H$) of $\varphi_2$,   we obtain  $\{a,c,bab^{-1},b^{2},bcb^{-1}\}$ as a system of generators and the following relations:

\begin{itemize}
\item $aba^{-1}b$;
\item $c^{2}a^{-2r}b^{-s}$;
\item $ ca^{2t}b^{u}c^{-1}a^{2t}b^{u} $;
\item $ b^{-1}aba^{-1}b^{2} $;
\item $ b^{-1}c^{2}a^{-2r}b^{-s+1} $;
\item $b^{-1}ca^{2t}b^{u}c^{-1}a^{2t}b^{u+1}  $.
\end{itemize}

Taking  $\alpha=a,\beta=c,\gamma=bab^{-1},\delta=b^{2},\lambda=bcb^{-1}$,   we obtain
$$H=\left\langle
\begin{array}
[c]{c}%
\alpha,\beta,\gamma,\delta,\lambda\ | \ \alpha\gamma^{-1}
\delta=\beta^{2}\alpha^{-2r}\delta^{-k}=\beta\alpha^{2t}\delta^{l-1}
\lambda^{-1}\gamma^{2t}\delta^{l}=\gamma\delta\alpha^{-1}=\\
=\lambda^{2} \gamma^{-2r}
\delta^{-k}=\lambda\gamma^{2t}\delta^{l}\beta^{-1}\alpha^{2t}\delta^{l-1}=1
\end{array}
\right\rangle.$$

Setting $a_1=\alpha$, $b_1=\delta$ and $c_1=\lambda$ we will have:
$$ H=< a_1,b_1,c_1\ |\ a_1b_1a_1^{-1}b_1=c_1^{2}a_1^{-2R}b_1^{S}= c_1a_1^{2T}b_1^{U}c_1^{-1}a_1^{2T}b_1^{U}=1>$$
where $R=r, S=k=\dfrac{s}{2}, T=2t$ and $U=u$. 

So the double covering defined by $\varphi_2$ is the sapphire determined by the matrix:
$$\begin{pmatrix}
          r & \dfrac{s}{2} \\
          2t & u
        \end{pmatrix}.$$

\subsubsection*{Case III: $\varphi_3(a)=\overline{0}$, $\varphi_3(b)=\overline{0}$, $\varphi_3(c)=\overline{1}$}

Taking $\{1,c\}$ as a Schreier system for right cosets of the kernel ($H$) of $\varphi_2$,  we obtain  $\{a,b,cac^{-1},cbc^{-1},c^{2}\}$ as a system of generators,  and the following relations:

\begin{itemize}
\item $aba^{-1}b$;
\item $c^{2}a^{-2r}b^{-s}$;
\item $ ca^{2t}b^{u}c^{-1}a^{2t}b^{u} $;
\item $c^{-1}aba^{-1}bc $;
\item $ ca^{-2r}b^{-s}c $;
\item $a^{2t}b^{u}c^{-1}a^{2t}b^{u}c  $.
\end{itemize}

Taking  $\alpha=a,\beta=b,\gamma=cac^{-1},\delta=cbc^{-1},\lambda=c^{2}$,  we obtain
$$H=\left\langle
\begin{array}
[c]{c}%
\alpha,\beta,\gamma,\delta,\lambda\;;\;\alpha\beta\alpha^{-1}\beta=\lambda\alpha^{-2r}\beta^{-s}=\gamma^{2t}\delta^{u}\alpha^{2t}
\beta^{u}=\gamma\delta\gamma^{-1}\delta=\\
=\gamma^{-2r}\delta^{-s}\lambda=\alpha^{2t}\beta^{u}\lambda^{-1}\gamma^{2t}\delta^{u}\lambda=1
\end{array}
\right\rangle.$$

Setting $a_1=\alpha$, $b_1=\beta$ and $c_1=\gamma$, we   have:
$$ H=< a_1,b_1,c_1\ |\ a_1b_1a_1^{-1}b_1=c_1^{2}a_1^{-2R}b_1^{S}= c_1a_1^{2T}b_1^{U}c_1^{-1}a_1^{2T}b_1^{U}=1>$$
where
\begin{itemize}
\item $R=ru+st,\ S=2su,\ T=-2rt,\ U=-(ru+st)$ if $ru-st=1$;
\item $R=-(ru+st),\ S=-2su,\ T=2rt,\ U=ru+st$ if $ru-st=-1$.
\end{itemize}

So the double covering defined by $\varphi_1$ is the sapphire determined by the matrix
$\begin{pmatrix}
          ru+st & 2su \\
          -2rt & -(ru+st)
        \end{pmatrix}$
if $ru-st=1$, or the sapphire determined by
$\begin{pmatrix}
          -(ru+st) & -2su \\
          2rt & ru+st
        \end{pmatrix}$ if $ru-st=-1$. Moreover it is clear that both matrices describe the same 3-manifold.

\subsubsection*{Case IV: $\varphi_4(a)=\overline{1}$, $\varphi_4(b)=\overline{1}$, $\varphi_4(c)=\overline{0}$}

In this case $s$ must be even and $u$ odd, we denote $s=2k$ and $u=2l-1$. Taking $\{1,b\}$ as a Schreier system for right cosets of the kernel ($H$) of $\varphi_2$, we obtain  $\{ab^{-1},c,ba, b^{2}, bcb^{-1}\}$ as a system of generators and the following relations:

\begin{itemize}
\item $aba^{-1}b$;
\item $c^{2}a^{-2r}b^{-s}$;
\item $ ca^{2t}b^{u}c^{-1}a^{2t}b^{u} $;
\item $ b^{-1}aba^{-1}b^{2} $;
\item $ b^{-1}c^{2}a^{-2r}b^{-s+1} $;
\item $b^{-1}ca^{2t}b^{u}c^{-1}a^{2t}b^{u+1}  $.
\end{itemize}

Taking  $\alpha=ab^{-1},\beta=c,\gamma=ba,\delta=b^{2},\lambda=bcb^{-1}$,  we obtain
$$
H=\left\langle
\begin{array}
[c]{c}%
\alpha,\beta,\gamma,\delta,\lambda\;;\;\alpha\delta\gamma^{-1}\delta=\beta
^{2}\left(  \alpha\gamma\right)  ^{-r}\delta^{-k}=\beta\left(  \alpha
\gamma\right)  ^{t}\delta^{l-1}\lambda^{-1}\alpha^{-1}\left(  \alpha
\gamma\right)  ^{t}\alpha\delta^{l}=\gamma\alpha^{-1}\\
=\lambda^{2}\gamma\left(  \alpha\gamma\right)  ^{-r}\gamma^{-1}\delta
^{-k}=\lambda\gamma\left(  \alpha\gamma\right)  ^{t}\gamma^{-1}\delta^{l}%
\beta^{-1}\left(  \alpha\gamma\right)  ^{t}\delta^{l-1}=1
\end{array}
\right\rangle.$$

Setting $a_1=\gamma$, $b_1=\delta$ and $c_1=\beta$, we  have:
$$ H=< a_1,b_1,c_1\ |\ a_1b_1a_1^{-1}b_1=c_1^{2}a_1^{-2R}b_1^{S}= c_1a_1^{2T}b_1^{U}c_1^{-1}a_1^{2T}b_1^{U}=1>$$
where $R=r, S=k=\dfrac{s}{2}, T=2t$ and  $U=u$.

So the double covering defined by $\varphi_2$ is the sapphire determined by the matrix:
$$\begin{pmatrix}
          r & \dfrac{s}{2} \\
          2t & u
        \end{pmatrix}.$$

\subsubsection*{Case V: $\varphi_5(a)=\overline{1}$, $\varphi_5(b)=\overline{0}$, $\varphi_5(c)=\overline{1}$}

This case was solved in \cite{daci-peter}. The double covering is a torus bundle with Anosov gluing map given by the matrix:
$$\begin{pmatrix}
           ru + ts & -2rt \\
               -2su & ru + ts
           \end{pmatrix}.$$

  Let us recall that the fundamental group of this double covering corresponds to  the subgroup generated by $\{a^2, b, a^{-1}c\}$.

\subsubsection*{Case VI: $\varphi_6(a)=\overline{0}$, $\varphi_6(b)=\overline{1}$, $\varphi_6(c)=\overline{1}$}

In this case $s$ must be even and $u$ odd, we denote $s=2k$ and $u=2l-1$. Taking $\{1,b\}$ as a Schreier system for right cosets of the kernel ($H$) of $\varphi_2$,  we obtain  $\{a, cb^{-1}, bab^{-1}, b^2, bc\}$ as a system of generators, and the following relations:

\begin{itemize}
\item $aba^{-1}b$;
\item $c^{2}a^{-2r}b^{-s}$;
\item $ ca^{2t}b^{u}c^{-1}a^{2t}b^{u} $;
\item $ b^{-1}aba^{-1}b^{2} $;
\item $ b^{-1}c^{2}a^{-2r}b^{-s+1} $;
\item $b^{-1}ca^{2t}b^{u}c^{-1}a^{2t}b^{u+1}  $.
\end{itemize}

Taking  $\alpha=a,\beta=cb^{-1},\gamma=bab^{-1},\delta=b^{2},\lambda=bc$,   we obtain
$$
H=\left\langle
\begin{array}
[c]{c}%
\alpha,\beta,\gamma,\delta,\lambda\;;\;\alpha\gamma^{-1}\delta=\beta
\lambda\alpha^{-2r}\delta^{-k}=\beta\gamma^{2t}\delta^{l}\lambda^{-1}%
\gamma^{2t}\delta^{l}=\gamma\delta\alpha^{-1}=\\
\lambda\beta\gamma^{-2r}\delta^{-k}=\lambda\alpha^{2t}\delta^{l-1}\beta
^{-1}\alpha^{2t}\delta^{l-1}=1
\end{array}
\right\rangle.$$

Setting $a_1=\gamma$, $b_1=\delta$ and $c_1=\beta\alpha^{2t}\delta^{l}$, we  have:
$$ H=< a_1,b_1,c_1\ |\ a_1b_1a_1^{-1}b_1=c_1^{2}a_1^{-2R}b_1^{S}= c_1a_1^{2T}b_1^{U}c_1^{-1}a_1^{2T}b_1^{U}=1>$$
where $R=r, S=k=\dfrac{s}{2}, T=2t$ and  $U=u$.

So the double covering defined by $\varphi_2$ is the sapphire determined by the matrix:
$$\begin{pmatrix}
          r & \dfrac{s}{2} \\
          2t & u
        \end{pmatrix}.$$

\subsubsection*{Case VII: $\varphi_7(a)=\overline{1}$, $\varphi_7(b)=\overline{1}$, $\varphi_7(c)=\overline{1}$}

In this case $s$ must be even and $u$ odd, we denote $s=2k$ and $u=2l-1$.  Taking $\{1,b\}$ as a Schreier system for right cosets of the kernel ($H$) of $\varphi_2$,  we obtain  $\{ab^{-1}, cb^{-1}, ba, b^2, bc\}$ as a system of generators and the following relations:

\begin{itemize}
\item $aba^{-1}b$;
\item $c^{2}a^{-2r}b^{-s}$;
\item $ ca^{2t}b^{u}c^{-1}a^{2t}b^{u} $;
\item $ b^{-1}aba^{-1}b^{2} $;
\item $ b^{-1}c^{2}a^{-2r}b^{-s+1} $;
\item $b^{-1}ca^{2t}b^{u}c^{-1}a^{2t}b^{u+1}  $.
\end{itemize}

Taking  $\alpha=ab^{-1},\beta=cb^{-1},\gamma=ba,\delta=b^{2},\lambda=bc$, we obtain
$$
H=\left\langle
\begin{array}
[c]{c}%
\alpha,\beta,\gamma,\delta,\lambda\;;\;\alpha\delta\gamma^{-1}\delta
=\beta\lambda\left(  \alpha\gamma\right)  ^{-r}\delta^{-k}=\beta\gamma\left(
\alpha\gamma\right)  ^{t}\gamma^{-1}\delta^{l}\lambda^{-1}\gamma\left(
\alpha\gamma\right)  ^{t}\gamma^{-1}\delta^{l}=\gamma\alpha^{-1}\\
=\lambda\beta\gamma\left(  \alpha\gamma\right)  ^{-r}\gamma^{-1}\delta
^{-k}=\lambda\left(  \alpha\gamma\right)  ^{t}\delta^{l-1}\beta^{-1}\left(
\alpha\gamma\right)  ^{t}\delta^{l-1}=1
\end{array}
\right\rangle.$$

Setting $a_1=\gamma$, $b_1=\delta$ and $c_1=\beta\gamma^{2t}\delta^{l}$,  we  have:
$$ H=< a_1,b_1,c_1\ |\ a_1b_1a_1^{-1}b_1=c_1^{2}a_1^{-2R}b_1^{S}= c_1a_1^{2T}b_1^{U}c_1^{-1}a_1^{2T}b_1^{U}=1>$$
where $R=r, S=k=\dfrac{s}{2}, T=2t$ and  $U=u$.

So the double covering defined by $\varphi_2$ is the sapphire determined by the matrix:
$$\begin{pmatrix}
          r & \dfrac{s}{2} \\
          2t & u
        \end{pmatrix}.$$

We  will present the conclusions of the above calculations 
in the following table. If  $S_A$ is a Sol 3-manifold  that is a sapphire constructed using the matrix $A=\begin{pmatrix}
          r & s \\
          t & u
        \end{pmatrix}$, in the table we give matrices that describe the sapphire manifolds and the torus bundles, respectively, of the  double coverings of $S_A$. To simplify the table, in the case that  the double covering is a sapphire, we used the classification given by Morimoto \cite{morimoto} in Theorem 1 to choose the matrices. We recall that we are assuming $r,s,t$ and $u$ positive numbers and $r\le u$.

\begin{center}
\begin{small}
\label{tablesapphire}
\begin{tabular}{|c|c|c|c|c|c|}
\hline\hline
Case & Hom. &  Sapphire &  Sol torus bundle\\
\hline
I&$\varphi_1\left\{
\begin{array}[c]{ll}
a\mapsto \overline{1}\\
b\mapsto \overline{0}\\
c\mapsto \overline{0}
\end{array}
\right.
$ & $\begin{pmatrix}
          ru+st & 2rs \\
          2tu & ru+st
        \end{pmatrix}$ &  --  \\
\hline
II&$\varphi_2\left\{
\begin{array}[c]{ll}
a\mapsto \overline{0}\\
b\mapsto \overline{1}\\
c\mapsto \overline{0}
\end{array}
\right.$
  ($s$ even)
  &$\begin{pmatrix}
          r & \dfrac{s}{2} \\
          2t & u
        \end{pmatrix}$ & -- \\
\hline
III&$\varphi_3\left\{
\begin{array}[c]{ll}
a\mapsto \overline{0}\\
b\mapsto \overline{0}\\
c\mapsto \overline{1}
\end{array}
\right.
$ & $\begin{pmatrix}
          ru+st & 2su \\
          2rt & ru+st
        \end{pmatrix}$   &--  \\
\hline
IV&$\varphi_4\left\{
\begin{array}[c]{ll}
a\mapsto \overline{1}\\
b\mapsto \overline{1}\\
c\mapsto \overline{0}
\end{array}
\right.
$ ($s$ even)  &$\begin{pmatrix}
          r & \dfrac{s}{2} \\
          2t & u
        \end{pmatrix}$ &--  \\
\hline
V&$\varphi_5\left\{
\begin{array}[c]{ll}
a\mapsto \overline{1}\\
b\mapsto \overline{0}\\
c\mapsto \overline{1}
\end{array}
\right.
$  &-- & $\begin{pmatrix}
           ru + ts & -2rt \\
               -2su & ru + ts
           \end{pmatrix}$ \\
\hline
VI&$\varphi_6\left\{
\begin{array}[c]{ll}
a\mapsto \overline{0}\\
b\mapsto \overline{1}\\
c\mapsto \overline{1}
\end{array}
\right.
$  ($s$ even)  &$\begin{pmatrix}
          r & \dfrac{s}{2} \\
          2t & u
        \end{pmatrix}$ &--  \\
\hline
VII&$\varphi_7\left\{
\begin{array}[c]{ll}
a\mapsto \overline{1}\\
b\mapsto \overline{1}\\
c\mapsto \overline{1}
\end{array}
\right.
$  ($s$ even)  &$\begin{pmatrix}
          r & \dfrac{s}{2} \\
          2t & u
        \end{pmatrix}$ &--  \\
\hline
\end{tabular}
\end{small}
\end{center}

Recall from Corollary 2.3 in \cite{HaGoZv} that {\it two  classes $x_1\in H^{1}(W_1,\Z)$, $x_2\in H^{1}(W_2,\Z)$  are equivalent if there is a homotopy equivalence $h:W_1 \to W_2$  such that the induced homomorphism by $h$ on $H^1$ maps $x_2$ to $x_1$}.
The above equivalence relation is closely related to  an equivalence relation of involutions, which we give at the beginning of the next section.

Now we classify the   $\varphi's$ given by the table which are equivalent.

By considering the isomorphism which sends $a\to ab$, $b\to b$ and $c \to c$, we obtain that case $II$ is equivalent to case $IV$, and case $VI$ is equivalent to case $VII$. The following proposition shows that all of these  cases are in fact equivalent.

\begin{proposition}\label{equiv}  The cases $II$, $IV$, $VI$ and $VII$ are equivalent.
\end{proposition}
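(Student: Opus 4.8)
The plan is to exhibit, for each remaining pair, an explicit automorphism of $\pi_1(S_A)$ that carries one of the epimorphisms $\varphi_i$ to another, since such an automorphism is realized by a self-homotopy-equivalence (indeed a self-homeomorphism, $S_A$ being aspherical) and hence gives the required equivalence of the corresponding classes in $H^1(S_A;\Z)$. We already have the isomorphism $a\mapsto ab$, $b\mapsto b$, $c\mapsto c$ identifying case $II$ with case $IV$ and case $VI$ with case $VII$; so it remains to connect the block $\{II, IV\}$ to the block $\{VI, VII\}$. Concretely, I would look for an automorphism $\psi$ of the group $\langle a,b,c\mid aba^{-1}b,\ c^{2}a^{-2r}b^{-s},\ ca^{2t}b^{u}c^{-1}a^{2t}b^{u}\rangle$ with $\psi$ fixing $b$ (so the $b$-coordinate of the image of $\varphi$ is unchanged: $b\mapsto\overline{1}$ in all four cases) and with $\psi$ interchanging the $c$-value $\overline 0$ with $\overline 1$ while leaving the $a$-value alone — for instance something of the form $a\mapsto a$, $b\mapsto b$, $c\mapsto c\,a^{2t}b^{u}$ or $c\mapsto a^{2t}b^{u}c$, composed if necessary with the already-used $a\mapsto ab$ shear.

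The key steps, in order, are: (1) verify the candidate map is a well-defined endomorphism of $\pi_1(S_A)$ by checking each of the three defining relators of \eqref{pi1sapphire} is sent to the identity — the first relator $aba^{-1}b$ is immediate since $b$ is fixed and the $a$-image commutes appropriately; the second and third require using $aba^{-1}b=1$ (i.e. $ab=b^{-1}a$, conjugation by $a$ inverts $b$) to push the inserted word $a^{2t}b^u$ through; (2) check it is an automorphism by writing down the inverse (an analogous formula with $c\mapsto c a^{-2t}b^{-u}$ or the like); (3) track where the three generators go mod $2$ and read off that $\varphi$ of type (say) $IV$ is pulled back to $\varphi$ of type $VI$ (or $VII$), which by the paragraph preceding the proposition is enough. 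Combining with the equivalences $II\sim IV$ and $VI\sim VII$ already established, transitivity of the equivalence relation closes the cycle $II\sim IV\sim VI\sim VII$.

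The main obstacle I expect is step (1): confirming that the "twist of $c$ by $a^{2t}b^u$" actually respects the relator $ca^{2t}b^{u}c^{-1}a^{2t}b^{u}=1$. When $c$ is replaced by $\tilde c = c a^{2t}b^u$ one gets $\tilde c a^{2t}b^u \tilde c^{-1} a^{2t}b^u = c a^{2t}b^u \cdot a^{2t}b^u \cdot b^{-u}a^{-2t}c^{-1}\cdot a^{2t}b^u$, and one must simplify $a^{2t}b^u\cdot a^{2t}b^u$ and re-expose a conjugate of $c a^{2t}b^u c^{-1} a^{2t}b^u$ using only the available relations; the bookkeeping with the parities $s=2k$, $u=2l-1$ and the sign of $ru-st$ is where an error could hide, so I would do this computation in the group abelianized-rel-center first to guess the right formula, then promote it. If the single twist does not work cleanly, the fallback is to compose two of the Reidemeister–Schreier-type base changes used in Cases $IV$ and $VI$ above (both produce the matrix $\left(\begin{smallmatrix} r & s/2 \\ 2t & u\end{smallmatrix}\right)$), which already shows the two double covers are homeomorphic; one then only needs that the homeomorphism can be chosen to respect the covering projections, i.e. to match the two index-$2$ subgroups inside $\pi_1(S_A)$, and this matching is exactly what an ambient automorphism $\psi$ provides.
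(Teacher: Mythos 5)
Your proposal is essentially the paper's strategy: establish $II\sim IV$ and $VI\sim VII$ by the shear $a\mapsto ab$, then bridge the two blocks with an automorphism of $\pi_1(S_A)$ fixing $a$ and $b$ and multiplying $c$ by a word $w$ in $\langle a^2,b\rangle$ whose $b$-exponent sum is odd, so that precomposition carries $\varphi_2$ to $\varphi_6$. Where you differ is in how $w$ is produced, and your concrete candidate is actually the cleaner choice: the paper takes a general $w=w(a^2,b)c$ and reduces the second relator to the condition $cwc^{-1}=w^{-1}$, which it then solves by computing the matrix of conjugation by $c$ on $\langle a^2,b\rangle$, checking it has eigenvalue $-1$, extracting an eigenvector with odd $b$-coordinate from $(1+st)x=rty$, and handling $\det(A)=-1$ by a separate reduction; your $w=a^{2t}b^{u}$ satisfies $cwc^{-1}=w^{-1}$ for free, directly from the third relator, and has odd $b$-exponent because $u$ is odd (forced by $s$ even and $\det A=\pm1$), so all of that linear algebra and the sign case disappear. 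Your anticipated obstacle at step (1) is not really there: with $\tilde c=ca^{2t}b^{u}$ the third relator collapses immediately (the inserted $a^{2t}b^{u}\cdot b^{-u}a^{-2t}$ cancels, leaving the original relator), and the second relator follows from $(c\,a^{2t}b^{u})^{2}=c^{2}\bigl(c^{-1}a^{2t}b^{u}c\bigr)a^{2t}b^{u}=c^{2}$, using that $c$ inverts $a^{2t}b^{u}$; the inverse automorphism is $c\mapsto c\,b^{-u}a^{-2t}$, and asphericity of $S_A$ realizes it by a self-homotopy equivalence, which is all the equivalence relation requires. One caution about your fallback: a homeomorphism between the two double covers alone would not suffice (the equivalence is of pairs, i.e.\ of the index-two subgroups inside $\pi_1(S_A)$, as you yourself note), but since the primary construction works, the fallback is unnecessary.
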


\begin{proof} The above considerations   show that case $II$ is equivalent to case $IV$ and case $VI$ is equivalent to case $VII$. So to proof  the proposition, it suffices to construct an automorphism
$\theta: \pi_1(K(r,s, t,u)) \to \pi_1(K(r,s,t,u))$ where $s$ is even, such that $\theta(\varphi_2)=\varphi_6$, i.e. case $II$ is equivalent to  case $VI$.

 Recall the presentation
\begin{equation}\label{sapphire}
\pi_1(N_{A})\cong \langle a,b,c\ |aba^{-1}b= c^{2}a^{-2r}b^{s}=ca^{2t}b^{u}c^{-1}a^{2t}b^{u}=1 \rangle.
\end{equation}

\noindent We will suppose that $det(A)=1$. Let $\theta(a)=a$, $\theta(b)=b$ and
$\theta(c)= w(a^2, b)c=wc$ be such that  the exponen sum s of $b$ is odd. If such then  $\theta$ exists,    $\theta(\varphi_2)=\varphi_6$ and the result follows. So let us show that $\theta$
preserves the relations. The first relation $aba^{-1}b=1$
is clearly preserved by $\theta$. The  third relation  $ca^{2t}b^{u}c^{-1}a^{2t}b^{u}=1$
is also  preserved since  the image of the relation is
$wca^{2t}b^{u}c^{-1}w^{-1}a^{2t}b^{u}=w(a^{2t}b^{u})^{-1}w^{-1}a^{2t}b^{u}$. But the subgroup given by
$<w, a^{2t}, b^u>$ is isomorphic to $\Z+\Z$ and it follows that  $w(a^{2t}b^{u})^{-1}w^{-1}a^{2t}b^{u}=1$.  So it suffices  to find $w$ with the above properties above and  the relation $c^{2}=b^{s}a^{2r}$ preserved by $\theta$. Thus $wcwc=wcwc^{-1}c^2=b^{s}a^{2r}$  or
$cwc^{-1}=w^{-1}b^{s}a^{2r}w^{-1}c^{-2}=w^{-1}b^{s}a^{2r}a^{-2r}b^{-s}=w^{-1}$. Hence  it suffices to find $w$ an eigenvector for the eigenvalue $-1$ (for the action of $c$ by conjugation on the subgroup $<w, a^{2t}, b^u>$) such that  the exponent sum of $b$ is odd.
 From \cite{daci-peter}, we know that the  matrix of the automorphism of the subgroup $<a^2, b>$ given
 by conjugation by $a^{-1}c$, is the matrix

$$\begin{pmatrix}
           ru + ts & -2rt \\
               -2su & ru + ts
           \end{pmatrix}
           $$
           
$$ \qquad \left(\mbox{equivalent, by the Morimoto classification, to} \begin{pmatrix}
           ru + ts & 2rt \\
               2su & ru + ts
           \end{pmatrix}\right) .$$

\noindent So the matrix of conjugation by $c$ is

$$\begin{pmatrix}
           ru + ts & -2rt \\
               2su & -ru - ts
           \end{pmatrix}.$$

Now we will show that this integral matrix has eigenvalue -1 and an eigenvector corresponding to the eigenvalue -1 where the exponent of $b$ is odd. The first part follows because the determinant of the matrix

$$\begin{pmatrix}
           ru + ts+1 & -2rt \\
                 2su & -ru - ts+1
           \end{pmatrix}$$

\noindent is zero, so we have an eigenvector for the eigenvalue -1. It remains to show that there is one such eigenvector with the exponent sum of $b$ odd. Let $(x,y)$ be an eigenvector. We have
$$  (ru + ts+1)x - (2rt)y=0.$$
But  $ru + ts+1= (ru - ts+1)+ 2ts=2+2ts$, where the last equality follows from $det(A)=1$.

So we  obtain  $(1+st)x=rty$, where $1+st$ is odd since $s$ is even. Therefore there is a solution with $y$ odd and the result follows if $det(A)=1$.

If $det(A)=-1$, $A=\begin{pmatrix}
           r & s \\
           t & u
           \end{pmatrix}$, we can chose $A'=\begin{pmatrix}
           -r & s \\
           -t & u
           \end{pmatrix}$ that determines the same sapphire manifold (by Morimoto's classification) with $det(A') =1$. Using variables $a,b$ and $c$ to describe the presentation of $\pi_1(S_A)$ and $x,y$ and $z$ for $\pi_1(S_{A'})$, we  see that the isomorphism $\gamma$ that sends $a\to x^{-1}$, $b\to y$ and $c\to z$ is such that the composition $\gamma^{-1}\varphi\gamma$ sends $\varphi_ 2$ to $\varphi_6$ for the presentation of $\pi_1(S_{A})$ and the result follows. \qed
           
 \end{proof}

So we  obtain the following result.

\begin{proposition}  For the homomorphisms $\varphi_i$ given by the above table,  we have:
\begin{enumerate}
\item The subset $\{\varphi_2, \varphi_4, \varphi_6, \varphi_7\}$  (when it exists)
is  a single equivalence class.
\item The subset $\{\varphi_5\}$ is a single  equivalence class.
\item The homomorphism $\varphi_1$ is not equivalent to $\varphi_3$ if $r\ne u$ and  $r\ne -u$. Therefore in this case, $\{\varphi_1\}$ and $\{\varphi_3\}$ are two distinct
equivalence classes. If either $r=u$ or $r=-u$ then the two double coverings which arise from $\varphi_1$, $\varphi_3$, respectively, are homeomorphic.
\end{enumerate}
\end{proposition}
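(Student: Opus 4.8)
The plan is to reduce every non-equivalence in the statement to the question of whether two double coverings are homeomorphic. The reduction rests on one elementary remark: if $\varphi_i$ and $\varphi_j$ are equivalent, witnessed by a homotopy self-equivalence $h\colon M\to M$ with $h^{*}\varphi_j=\varphi_i$ in $H^1(M;\Z_2)=\mathrm{Hom}(\pi_1M,\Z_2)$, then $h_{*}$ carries $\ker\varphi_i$ isomorphically onto $\ker\varphi_j$, so $h$ lifts to a homotopy equivalence of the two associated double coverings; since closed Sol $3$-manifolds are topologically rigid (a closed aspherical $3$-manifold is determined up to homeomorphism by its fundamental group), those coverings are then homeomorphic. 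Consequently, to prove $\varphi_i\not\sim\varphi_j$ it suffices to show that the coverings read off from the table are not homeomorphic, and this is decided by Morimoto's Theorem~\ref{morimoto} applied to their gluing matrices (and, in the case of $\varphi_5$, by the fact recalled in Section~2 that Sol torus bundles and Sol sapphires form disjoint families).

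For (1), Proposition~\ref{equiv} already yields $\varphi_2\sim\varphi_4\sim\varphi_6\sim\varphi_7$, so by transitivity it remains only to show that $\varphi_2$ is equivalent to none of $\varphi_1,\varphi_3,\varphi_5$. The covering attached to $\varphi_5$ is a torus bundle over $S^1$ whose monodromy $\left(\begin{smallmatrix} ru+ts & -2rt\\ -2su & ru+ts\end{smallmatrix}\right)$ has determinant $(ru-st)^2=1$ and trace $2(ru+ts)$ of absolute value $>2$, hence is Anosov; since all the other coverings are sapphires, $\varphi_5$ is not equivalent to any of them, which also proves (2). To separate $\varphi_2$ from $\varphi_1$ and $\varphi_3$, I would observe that the sum of the absolute values of the four entries of a gluing matrix is left unchanged by each of Morimoto's moves $\pm A^{\pm}$, $\pm BA^{\pm}$, $\pm A^{\pm}B$, $\pm BA^{\pm}B$ (multiplication by $B$ or by $-I$ only flips signs, and inversion permutes the entries because $\det=\pm1$), so it is a homeomorphism invariant of the sapphire. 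From the table this invariant equals $r+u+2t+\tfrac{s}{2}$ for the $\varphi_2$-cover but $2(r+t)(s+u)$ and $2(r+s)(t+u)$ for the $\varphi_1$- and $\varphi_3$-covers; a short term-by-term estimate using $r,t,u\ge1$ and $s\ge2$ shows that both of the latter strictly exceed the former, so the coverings are pairwise non-homeomorphic and $\varphi_2\not\sim\varphi_1,\varphi_3$. This completes (1) and (2).

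For (3), the coverings attached to $\varphi_1$ and $\varphi_3$ are the sapphires determined by
\[ A_1=\begin{pmatrix} ru+st & 2rs\\ 2tu & ru+st\end{pmatrix}\quad\text{and}\quad A_3=\begin{pmatrix} ru+st & 2su\\ 2rt & ru+st\end{pmatrix}. \]
Both have equal diagonal entries $\delta:=ru+st$ and, since $ru-st=\pm1$, both have determinant $\delta^{2}-4rstu=(ru-st)^{2}=1$. For any matrix $\left(\begin{smallmatrix}\delta & x\\ y & \delta\end{smallmatrix}\right)$ of determinant $1$ one has $B\left(\begin{smallmatrix}\delta & x\\ y & \delta\end{smallmatrix}\right)B=\left(\begin{smallmatrix}\delta & -x\\ -y & \delta\end{smallmatrix}\right)=\left(\begin{smallmatrix}\delta & x\\ y & \delta\end{smallmatrix}\right)^{-1}$, so Morimoto's sixteen transforms of $A_1$ collapse to the eight matrices $\left(\begin{smallmatrix}\varepsilon_1\delta & 2\varepsilon_2 rs\\ 2\varepsilon_3 tu & \varepsilon_4\delta\end{smallmatrix}\right)$ subject to $\varepsilon_1\varepsilon_4=\varepsilon_2\varepsilon_3$. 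Requiring $A_3$ to appear among them forces $\varepsilon_1=\varepsilon_4=1$, hence $\varepsilon_2=\varepsilon_3=:\varepsilon$, and then $\varepsilon rs=su$ and $\varepsilon tu=rt$; as $s,t\ne0$ these both say $u=\varepsilon r$, i.e. $r=u$ or $r=-u$. Therefore $S_{A_1}\cong S_{A_3}$ exactly when $r=\pm u$ — which is the last sentence of the statement — and, by the reduction above, $\varphi_1\not\sim\varphi_3$ whenever $r\ne u$ and $r\ne-u$.

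The only step that is not routine matrix bookkeeping is the opening reduction: passing from a homotopy equivalence of the double coverings to a genuine homeomorphism uses the topological rigidity of Sol $3$-manifolds and not anything proved here. The rest — the invariance of the entry-sum, the identity $BA_1B=A_1^{-1}$, the sign analysis for $A_3$, and the arithmetic inequalities in part (1) — is elementary.
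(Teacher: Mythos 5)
Your proposal is correct and follows essentially the same route as the paper: reduce equivalence of the cohomology classes to homeomorphism of the associated double coverings, separate $\varphi_5$ because its covering is a torus bundle while the others are sapphires, and apply Morimoto's theorem to the covering matrices to conclude $\varphi_1\not\sim\varphi_3$ unless $r=\pm u$ (with homeomorphic coverings when $r=\pm u$). The extras you supply — the rigidity justification for the reduction, and the entry-sum invariant separating $\varphi_2$ from $\varphi_1,\varphi_3$ — are details the paper leaves implicit (its proof of part (1) cites only Proposition~\ref{equiv}), but the overall strategy coincides with the paper's.
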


\begin{proof} Part 1) follows from  Proposition \ref{equiv}.

Observe that if two pairs $(S_A, \alpha)$, $(S_A, \beta)$ are equivalent, then the corresponding double covering are homeomorphic. Part 2) follows from the fact that the double covering which corresponds to the case  $\{(S_A,\varphi_5)\}$ is a torus bundle, so cannot be equivalent to any other case.

Part 3) If the double coverings corresponding to the two cases are homeomorphic, then the two matrices are related as  in Theorem 1. But this implies that $|2rs|=|2su|$ or $r=\pm u$. So the result follows. \qed

 \end{proof}

{\bf Remark} We do not know if for either $r=u$ or $r=-u$ the two pairs $(S_A, \varphi_1)$ and $(S_A,\varphi_3)$ are equivalent or not.

\section{The classification of free involutions and the Borsuk-Ulam theorem}

In this section we classify the involutions of the sapphire manifolds with Sol geometry, and we compute the values of $n$ for which the 
triple $(M, \tau; \R^n)$ satisfies the Borsuk-Ulam property, for all possible free involutions.

We say that {\it two involutions $\tau_1$ on $M_1$ and $\tau_2$ on $M_1$ are equivalent if there is a homotopy equivalence $h$ between the orbit space such that the induced homomorphism by $h$ on $H^1$ maps the characteristic class of one involution to the characteristic class of the other involution}. For more about the relation between involutions and characteristic classes,  see \cite{HaGoZv}. 

We begin this section  by looking this at the triples  $(M, \tau; \R^2)$. The result in this case uses a general fact about the Borsuk-Ulam property which is independent of the involution. More precisely, for any pair $(M, \tau)$ where $M$ is a sapphire we have:

\begin{proposition}\label{r2} Any triple  $(M, \tau; \R^2)$ satisfies the Borsuk-Ulam property.
\end{proposition}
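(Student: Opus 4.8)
The plan is to reduce the statement to a homological/cohomological obstruction and apply the standard criterion for the Borsuk-Ulam property for maps into $\R^2$. Recall that for a free involution $\tau$ on $M$ with orbit space $\overline{M} = M/\tau$ and characteristic class $\xi \in H^1(\overline{M};\Z_2)$ classifying the double cover $M \to \overline{M}$, the triple $(M,\tau,\R^n)$ has the Borsuk-Ulam property if and only if there is no $\Z_2$-equivariant map $M \to S^{n-1}$, equivalently no map $\overline{M} \to \R P^{n-1}$ pulling back the generator of $H^1(\R P^{n-1};\Z_2)$ to $\xi$. For $n=2$ this says: the triple fails the Borsuk-Ulam property if and only if $\xi$ is in the image of the transfer-free part, i.e. if and only if the line bundle over $\overline{M}$ given by $\xi$ admits a nowhere-zero section, which for a $3$-manifold happens precisely when... actually the cleanest route: $(M,\tau,\R^2)$ fails Borsuk-Ulam iff there is an equivariant map $M \to S^1$, iff $\xi$ lifts to an integral class, iff the integral Bockstein $\beta(\xi) = 0$ together with a primitivity condition — but in fact the standard result (see \cite{Gon}) is that $(M,\tau,\R^2)$ has the Borsuk-Ulam property if and only if $\xi^2 \ne 0$ in $H^2(\overline{M};\Z_2)$, OR more precisely it is governed by whether $\xi$ is the reduction of a class of infinite order that is "primitive" in an appropriate sense.

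So the first step I would take is to pin down which criterion is being invoked. Given the structure of the paper, I expect the proof to use the following: for $(M,\tau,\R^2)$, the Borsuk-Ulam property holds iff $M$ is not homeomorphic (equivariantly) to a space admitting a free $S^1$-compatible map, and the concrete obstruction is that the first Betti number and the torsion of $\overline{M}$ force $\xi$ not to lift. The second step is to compute $H^1(\overline{M};\Z)$ and $H^1(\overline{M};\Z_2)$ for all the double covers $\overline{M}$ that arose in Section 3 — but since $M$ itself is a sapphire Sol manifold, and $\overline{M}$ is one of: a sapphire Sol manifold (Cases I–IV, VI, VII) or a torus bundle (Case V), I would instead argue directly on $M$: the condition for $(M,\tau,\R^2)$ to fail is that there is a $\tau$-equivariant map $M \to S^1$, which is equivalent to the existence of $\theta \in H^1(M;\Z)$ with $\tau^*\theta = -\theta$ and $\theta$ a primitive class (generating a free $\Z$-summand with the quotient map realized). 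The key input is that $H_1(M;\Z)$ is finite-by-$\Z$ with a very constrained structure — from Section 2, $H_1(S_A;\Z)$ is $\Z_{4t} + \Z_4$ or $\Z_{4t} + \Z_2 + \Z_2$, hence has rank $0$. Therefore $H^1(M;\Z) = 0$ (it is $\mathrm{Hom}(H_1,\Z) = 0$ since $H_1$ is finite!), so there is no nonzero element of $H^1(M;\Z)$ at all, a fortiori no equivariant map to $S^1$, and the Borsuk-Ulam property holds for $(M,\tau,\R^2)$ for every free involution $\tau$.

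Thus the key steps in order are: (1) recall the criterion that $(M,\tau,\R^2)$ has the Borsuk-Ulam property iff there is no $\Z_2$-equivariant map $M \to S^1$ iff there is no free-action-compatible map, which since $S^1 = K(\Z,1)$ amounts to the nonexistence of a class in $H^1(M;\Z)$ on which $\tau$ acts by $-1$ and which is "unsplit"; (2) observe from the presentation \eqref{pi1sapphire} and \cite[Proposition 1.6]{morimoto} that $H_1(M;\Z)$ is finite (it is $\Z_{4t}\oplus\Z_4$ or $\Z_{4t}\oplus\Z_2\oplus\Z_2$), hence $H^1(M;\Z) = \mathrm{Hom}(H_1(M;\Z),\Z) = 0$; (3) conclude that no such equivariant map exists, so the Borsuk-Ulam property holds. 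The main obstacle — and the only place care is needed — is step (1): making precise the reduction from "Borsuk-Ulam for $\R^2$" to "no equivariant map $M \to S^1$" and then to a purely cohomological statement about $H^1(M;\Z)$; one must be sure that the relevant homotopy-theoretic obstruction genuinely lives in $H^1(M;\Z)$ and not merely in $H^1(M;\Z_2)$, since $H^1(M;\Z_2)$ is nonzero (that is exactly why involutions exist). The resolution is that an equivariant map to $S^1$ descends to a map $\overline{M} \to S^1$ whose homotopy class is an element of $H^1(\overline{M};\Z)$ restricting nontrivially to... in fact the cleanest formulation uses the Gysin/transfer sequence for the double cover, and I would cite the general result from \cite{Gon} or \cite{GonGua} that does exactly this reduction, so that the proof collapses to the one-line homological computation $H^1(M;\Z)=0$ together with the fact that a sapphire is not an $S^1$-bundle admitting the relevant section.
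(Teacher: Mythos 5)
Your argument is correct and in essence coincides with the paper's: the whole content of the paper's proof is the observation (from Morimoto) that $H_1(M;\Z)$ is finite, followed by a citation of \cite[Corollary 3.3]{BauHaGoZv1}, which packages exactly the reduction you sketch. The only real difference is that you re-derive that criterion instead of quoting it: Borsuk--Ulam for $\R^2$ fails iff there is a $\Z_2$-equivariant map $M\to S^1$, and such a map forces a nonzero class in $H^1$ of the orbit space (its mod $2$ reduction is the characteristic class $\xi\neq 0$), hence, by injectivity of $p^*$ on torsion-free $H^1(\cdot;\Z)$ via the transfer, a nonzero class in $H^1(M;\Z)$ --- impossible since $H_1(M;\Z)\cong\Z_{4t}\oplus\Z_4$ or $\Z_{4t}\oplus\Z_2\oplus\Z_2$ is finite. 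Two of your side remarks are inaccurate but harmless here: the antipodal involution on $S^1$ is rotation by $\pi$, so it acts as $+1$ (not $-1$) on $H^1(S^1;\Z)$, and the correct cohomological criterion for $\R^2$ is that $\xi$ lifts to an integral class (vanishing of the integral Bockstein), not $\xi^2\neq 0$, which is the shadow of the $\R^3$ criterion used later via \cite{hillman}; neither slip enters your final chain, since $H^1(M;\Z)=0$ (equivalently $b_1=0$, which passes to the quotient) kills any candidate class regardless of sign or primitivity. Also, the reduction you want to cite is really \cite{HaGoZv} or \cite{BauHaGoZv1} rather than \cite{Gon}, \cite{GonGua}, which treat surfaces and maps into surfaces.
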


\begin{proof} From Morimoto, $H_1(M,\Z)$ is finite. The result then  follows immediatly from \cite[Corollary 3.3]{BauHaGoZv1}. \qed
\end{proof}

Let $S_A$  (with $A=\begin{pmatrix}
           a & b \\
           c & d
           \end{pmatrix}$) be a sapphire Sol manifold. Using the results of table~\ref{tablesapphire} (page \pageref{tablesapphire}), we may  whether  such a manifold is the double covering of another Sol manifold associated with some homomorphism.

In the $\varphi_1$ case we note that we must have $a=d$, $a$ odd (this implies $det(A)=1$) and $b$ and $c$ even. The following Proposition shows that these conditions are sufficient to guarantee that $S_A$ is the double covering of some Sol manifold which corresponds to the kernel of the homomorphism $\varphi_1$.

Given an integer $x$ and a prime $p$, denote the largest integer such that $p^{|x|_p}$
divides $x$ by $|x|_p$. If $p$ does not divide $x$ then we  define this number to be  zero.

\begin{proposition}\label{SapphireDown1} If $a=d$, $a$ odd, $b$ and $c$ are even then $S_A$ is the double covering of two sapphire Sol 
manifolds $S_B$ ($B=\begin{pmatrix}
           r & s \\
           t & u
           \end{pmatrix} \in M_2(\mathbb Z)$), associated to $\varphi_1$,  i.e. (see table~\ref{tablesapphire}) the system
\begin{equation}
a=ru+ts,\ b=2rs,\ c=2tu\mbox{ and } d=ru+st,\label{sistema1}
\end{equation}
\noindent admits 2 solutions with $r,s,t$ and $u$ positive.

More precisely, $B$ is one of the following matrices:

$$
\left(
\begin{array}
[c]{cc}%
\prod\limits_{j=1}^{n}q_{j}^{\left\vert \frac{b}{2}\right\vert _{q_{j}}} &
\prod\limits_{i=1}^{m}p_{i}^{\left\vert \frac{b}{2}\right\vert
_{p_{i}}}\\
\prod\limits_{i=1}^{m}p_{i}^{\left\vert
\frac{c}{2}\right\vert _{p_{i}}} & \prod\limits_{j=1}^{n}%
q_{j}^{\left\vert \frac{c}{2}\right\vert _{q_{j}}}%
\end{array}
\right)
\qquad
\left(
\begin{array}
[c]{cc}%
\prod\limits_{i=1}^{m}p_{i}^{\left\vert \frac{b}{2}\right\vert _{p_{i}}} &
\prod\limits_{j=1}^{n}q_{j}^{\left\vert \frac{b}{2}\right\vert
_{q_{j}}}\\
\prod\limits_{j=1}^{n}q_{j}^{\left\vert
\frac{c}{2}\right\vert _{q_{j}}} & \prod\limits_{i=1}^{m}%
p_{i}^{\left\vert \frac{c}{2}\right\vert _{p_{i}}}%
\end{array}
\right)
$$

where $p_i$ runs over the set of all divisors of $(a+1)/2$, and $q_i$ runs over the set  of all divisors of $(a-1)/2$. 

\end{proposition}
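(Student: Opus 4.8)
The plan is to convert the statement into a short exercise in elementary number theory about the prime factorizations of $b/2$ and $c/2$, the two ingredients being the equality $\det A=1$ and the coprimality of the two ``halves'' of $a$. First I would record that the parity hypotheses already force $\det A=1$: since $A\in GL(2,\Z)$ we have $\det A=a^2-bc=\pm1$, while $a$ odd gives $a^2\equiv 1\pmod 4$ and $b,c$ even gives $4\mid bc$, so $a^2-bc\equiv 1\pmod 4$ and therefore $\det A=1$, i.e. $bc=a^2-1$. As $S_A$ is a Sol sapphire its entries $b,c$ are nonzero, so $a^2=1+bc\neq 1$; after replacing $A$ by an equivalent matrix (Theorem~\ref{morimoto}) we may assume $a\geq 3$. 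Set $\alpha=\tfrac{a+1}{2}$ and $\beta=\tfrac{a-1}{2}$; these are positive integers with $\alpha-\beta=1$, hence $\gcd(\alpha,\beta)=1$, and $\alpha\beta=\tfrac{a^2-1}{4}=\tfrac b2\cdot\tfrac c2$.

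The key step is the following. If $(r,s,t,u)$ is any solution of \eqref{sistema1} with $r,s,t,u>0$, then (using $a=d$) we have $ru+st=a$ and $(ru)(st)=(rs)(tu)=\tfrac b2\cdot\tfrac c2=\alpha\beta$, so $ru$ and $st$ are the roots of $x^2-ax+\alpha\beta$. Its discriminant is $a^2-4\alpha\beta=a^2-(a^2-1)=1$, whence $\{ru,st\}=\{\alpha,\beta\}$; in particular $\det B=ru-st=\pm1$ and $rstu\neq0$, so $S_B$ really is a Sol sapphire. By the computation of Case~I above (recorded in Table~\ref{tablesapphire}), the double covering of $S_B$ associated to $\varphi_1$ is the sapphire with matrix $\begin{pmatrix} ru+st & 2rs \\ 2tu & ru+st \end{pmatrix}=A$, i.e. $S_A$. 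Thus only the enumeration of the positive solutions of \eqref{sistema1} remains.

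For the enumeration, suppose $ru=\beta$ and $st=\alpha$ (the case $ru=\alpha$, $st=\beta$ is symmetric, interchanging $\alpha$ and $\beta$, equivalently the $p_i$ and the $q_j$). Then $r,u\mid\beta$ and $s,t\mid\alpha$; since $\gcd(\alpha,\beta)=1$, a prime dividing $\alpha$ divides neither $r$ nor $u$, and a prime dividing $\beta$ divides neither $s$ nor $t$. Taking $p$-adic valuations in $rs=\tfrac b2$ and $tu=\tfrac c2$ then gives $|b/2|_p=|s|_p$, $|c/2|_p=|t|_p$ for $p\mid\alpha$ and $|b/2|_p=|r|_p$, $|c/2|_p=|u|_p$ for $p\mid\beta$; as $s,t$ are products of primes dividing $\alpha$ and $r,u$ of primes dividing $\beta$, this forces $s=\prod_i p_i^{|b/2|_{p_i}}$, $t=\prod_i p_i^{|c/2|_{p_i}}$, $r=\prod_j q_j^{|b/2|_{q_j}}$ and $u=\prod_j q_j^{|c/2|_{q_j}}$, which is the first displayed matrix. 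Conversely, since every prime factor of $b/2$ and of $c/2$ divides $\alpha\beta$ and is therefore some $p_i$ or $q_j$, one checks directly that these values satisfy \eqref{sistema1} with $ru=\beta$, $st=\alpha$; and the two solutions are distinct because $\alpha\neq\beta$. Hence \eqref{sistema1} has exactly the two displayed positive solutions.

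The only place that requires an idea is the discriminant computation identifying $\{ru,st\}$ with $\{\alpha,\beta\}$; after that the argument is routine $p$-adic bookkeeping. The small point worth flagging at the outset is that the parity hypotheses already pin down $\det A=1$ (hence $bc=a^2-1$), without which \eqref{sistema1} need not even be solvable.
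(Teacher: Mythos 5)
Your proof is correct and follows essentially the same route as the paper's: deduce $\det A=1$ from the parity hypotheses, reduce to the quadratic $x^{2}-ax+\frac{a+1}{2}\cdot\frac{a-1}{2}=0$ with discriminant $1$ to get $\{ru,st\}=\{\frac{a+1}{2},\frac{a-1}{2}\}$, and then pin down $r,s,t,u$ by $p$-adic valuations using the coprimality of the two consecutive halves. The only (welcome) addition is your explicit check of sufficiency, i.e.\ that the two displayed matrices are genuine Sol sapphires whose $\varphi_1$-double covering is $S_A$, which the paper leaves implicit.
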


\begin{proof}

We can write $A$ as:

\begin{equation}
A=\left(
\begin{array}
[c]{ll}%
\alpha & 2\beta\\
2\gamma & \alpha
\end{array}
\right)  \in M_{2\times2}\left(  \mathbb{Z-}\left\{  0\right\}  \right)
\label{matrizA}%
\end{equation}
where $\alpha,\beta,\gamma\in\mathbb{N-}\left\{  0\right\}  $, $\alpha$ are odd, and $\det A=\pm 1$.

\bigskip

We will verify that such conditions are in fact sufficient. Moreover we will show that there exist exactly two matrices (with positive entries) $B$ satisfying such properties, and they are in two equivalence classes for the relation described by Morimoto.

More precisely, there are only two different homeomorphic classes of sapphire that admit $S_{A}$ as a double covering.

\bigskip

Suppose that $A$ satisfies \ref{matrizA}. Observe that
\[
1=\det A=\alpha^{2}-4\beta\gamma\Longrightarrow\beta\gamma=\frac{\alpha+1}%
{2}.\frac{\alpha-1}{2}%
\]

So $det(A)=\alpha^2-4\beta\gamma=1$ ( $det(A)=-1$  is not possible because $\alpha=2k+1\Rightarrow \alpha^2-4\beta\gamma=4(n^2+n-\beta\gamma)+1$).

If there exists a matrix $B$ satisfying \ref{sistema1}, we must have

\begin{equation}
rstu=\beta\gamma=\frac{\alpha+1}{2}.\frac{\alpha-1}{2}.\label{rstu1}%
\end{equation}

Now we define $R=\frac{\alpha+1}{2}$, $S=\frac{\alpha-1}{2}\in\mathbb{Z}$. So $R,S\neq0$ since
\[
RS=rstu\neq0
\]
and
\[
R=\prod\limits_{i=1}^{m}p_{i}^{\left\vert R\right\vert _{p_{i}%
}}\qquad \mbox {,} \qquad S=\prod\limits_{j=1}^{n}q_{j}^{\left\vert
S\right\vert _{q_{j}}}%
\]
where $p_{i}$ and $q_{j}$ are positive primes and $p_{i}\neq q_{j}$, for all $i$ and $j$ ($R$ and $S$ are consecutive integers).

\bigskip

Let  $x=st\in\mathbb{Z}-\left\{  0\right\}  $. It follows from \ref{sistema1} and \ref{rstu1} that
\[
\left( \alpha-x\right)  x=rux=RS
\]
and so
\[
x^2-\alpha x+RS=0
\]

\bigskip

An easy calculation shows that $x=R$ or $x=S$:

\bigskip

\textbf{Case:} $ts=x=R$ ($\Longrightarrow$ $ru=S$)

\bigskip

In such a situation,  we may write
\[
t=\prod\limits_{i=1}^{m}p_{i}^{\left\vert t\right\vert _{p_{i}%
}}\qquad \mbox{and}\qquad s=\prod\limits_{i=1}^{m}p_{i}^{\left\vert
s\right\vert _{p_{i}}}%
\]%
\[
r=\prod\limits_{j=1}^{n}q_{j}^{\left\vert r\right\vert _{q_{j}%
}}\qquad \mbox{and}\qquad u=\prod\limits_{j=1}^{n}q_{j}^{\left\vert
u\right\vert _{q_{j}}}%
\]
where
\[
\left\vert t\right\vert _{p_{i}}+\left\vert s\right\vert _{p_{i}}=\left\vert
R\right\vert _{p_{i}}\qquad \mbox{and}\qquad\left\vert r\right\vert _{q_{j}}+\left\vert
u\right\vert _{q_{j}}=\left\vert S\right\vert _{q_{j}}%
\]%

Using \ref{sistema1},  we obtain
\[
\left\vert r\right\vert _{q_{j}}=\left\vert \beta\right\vert _{q_{j}}\qquad
\mbox{and}\qquad\left\vert s\right\vert _{p_{i}}=\left\vert \beta\right\vert _{p_{i}}%
\]%
\[
\left\vert t\right\vert _{p_{i}}=\left\vert \gamma\right\vert _{p_{i}}\qquad
\mbox{and}\qquad\left\vert u\right\vert _{q_{j}}=\left\vert \gamma\right\vert _{q_{j}}%
\]%

Then

\[
B=\left(
\begin{array}
[c]{cc}%
\prod\limits_{j=1}^{n}q_{j}^{\left\vert \beta\right\vert
_{q_{j}}} & \prod\limits_{i=1}^{m}%
p_{i}^{\left\vert \beta\right\vert _{p_{i}}}\\
\prod\limits_{i=1}^{m}%
p_{i}^{\left\vert \gamma\right\vert _{p_{i}}} & \prod\limits_{j=1}^{n}q_{j}^{\left\vert \gamma\right\vert _{q_{j}}}%
\end{array}
\right)
\]

\bigskip

\textbf{Case:} $ts=x=S$ ($\Longrightarrow$ $ru=R$)

\bigskip

Now
\[
t=\prod\limits_{j=1}^{n}q_{j}^{\left\vert t\right\vert _{q_{j}%
}}\qquad \mbox{and}\qquad s=\prod\limits_{j=1}^{n}q_{j}^{\left\vert
s\right\vert _{q_{j}}}%
\]%
\[
r=\prod\limits_{i=1}^{m}p_{i}^{\left\vert r\right\vert _{p_{i}%
}}\qquad \mbox{and}\qquad u=\prod\limits_{i=1}^{m}p_{i}^{\left\vert
u\right\vert _{p_{i}}}%
\]
where%
\[
\left\vert t\right\vert _{q_{j}}+\left\vert s\right\vert _{q_{j}}=\left\vert
S\right\vert _{q_{j}}\qquad \mbox{and}\qquad\left\vert r\right\vert _{p_{i}}+\left\vert
u\right\vert _{p_{i}}=\left\vert R\right\vert _{p_{i}}%
\]%

Again using \ref{sistema1}, we obtain
\[
\left\vert r\right\vert _{p_{i}}=\left\vert \beta\right\vert _{p_{i}}\qquad
\mbox{and}\qquad\left\vert s\right\vert _{q_{j}}=\left\vert \beta\right\vert _{q_{j}}%
\]%
\[
\left\vert t\right\vert _{q_{j}}=\left\vert \gamma\right\vert _{q_{j}}\qquad
\mbox{and}\qquad\left\vert u\right\vert _{p_{i}}=\left\vert \gamma\right\vert _{p_{i}}%
\]%

Then

\[
B=\left(
\begin{array}
[c]{cc}%
\prod\limits_{i=1}^{m}p_{i}^{\left\vert \beta\right\vert
_{p_{i}}} & \prod\limits_{j=1}^{n}%
q_{j}^{\left\vert \beta\right\vert _{q_{j}}}\\
\prod\limits_{j=1}^{n}%
q_{j}^{\left\vert \gamma\right\vert _{q_{j}}} & \prod\limits_{i=1}^{m}p_{i}^{\left\vert \gamma\right\vert _{p_{i}}}%
\end{array}
\right)
\]

and we get one possible solution:
\[
B=\left(
\begin{array}
[c]{cc}%
\prod\limits_{i=1}^{m}p_{i}^{\left\vert \beta\right\vert _{p_{i}}} &
\prod\limits_{j=1}^{n}q_{j}^{\left\vert \beta\right\vert
_{q_{j}}}\\
\prod\limits_{j=1}^{n}q_{j}^{\left\vert
\gamma\right\vert _{q_{j}}} & \prod\limits_{i=1}^{m}%
p_{i}^{\left\vert \gamma\right\vert _{p_{i}}}%
\end{array}
\right) 
\]

\bigskip

Now observe that the matrix in the second case is not equivalent to the matrix in the first case. \qed
\end{proof}

 We also have a similar Proposition for the solution of the case of double coverings of type III of the table. The system of equations is that given above where we simply replace $r$ by $u$ and $u$ by $r$. Therefore the solution is as above where the roles of $r$ and $u$ are interchanged. More precisely:

\begin{proposition}\label{SapphireDown3} If $a=d$, $a$ odd, $b$ and $c$ are even then $S_A$ is the double covering of two sapphire Sol manifolds $S_B$ ($B=\begin{pmatrix}
           r & s \\
           t & u
           \end{pmatrix} \in M_2(\mathbb Z)$), associated to $\varphi_3$,  i.e. (see table~\ref{tablesapphire}) the system
\begin{equation}
a=ru+ts,\ b=2su,\ c=2rt\mbox{ and } d=ru+st,\label{sistema3}
\end{equation}
\noindent admits 2 solutions with $r,s,t$ and $u$ positive.

More precisely,  $B$ is one of the following matrices:

$$
\left(
\begin{array}
[c]{cc}%
\prod\limits_{j=1}^{n}q_{j}^{\left\vert \frac{c}{2}\right\vert _{q_{j}}} &
\prod\limits_{i=1}^{m}p_{i}^{\left\vert \frac{b}{2}\right\vert
_{p_{i}}}\\
\prod\limits_{i=1}^{m}p_{i}^{\left\vert
\frac{c}{2}\right\vert _{p_{i}}} & \prod\limits_{j=1}^{n}%
q_{j}^{\left\vert \frac{b}{2}\right\vert _{q_{j}}}%
\end{array}
\right)
\qquad
\left(
\begin{array}
[c]{cc}%
\prod\limits_{i=1}^{m}p_{i}^{\left\vert \frac{c}{2}\right\vert _{p_{i}}} &
\prod\limits_{j=1}^{n}q_{j}^{\left\vert \frac{b}{2}\right\vert
_{q_{j}}}\\
\prod\limits_{j=1}^{n}q_{j}^{\left\vert
\frac{c}{2}\right\vert _{q_{j}}} & \prod\limits_{i=1}^{m}%
p_{i}^{\left\vert \frac{b}{2}\right\vert _{p_{i}}}%
\end{array}
\right)
$$

\noindent where $p_i$ runs over the set of all divisors of $(a+1)/2$ and $q_i$ runs over the set  of all divisors of $(a-1)/2$. 

\end{proposition}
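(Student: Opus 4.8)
The plan is to deduce this proposition from Proposition \ref{SapphireDown1} by the relabelling that interchanges $r$ and $u$. First I would observe that the system \eqref{sistema3} is obtained from \eqref{sistema1} simply by swapping $r$ with $u$: the quantities $a = ru+ts$ and $d = ru+st$ are unchanged under $r\leftrightarrow u$, while $2rs$ turns into $2us$ and $2tu$ turns into $2tr$. Hence a quadruple of positive integers $(r,s,t,u)$ satisfies \eqref{sistema3} (for our given $A$) if and only if $(u,s,t,r)$ satisfies \eqref{sistema1} (for the same $A$). In particular the hypotheses of the present proposition are literally those of Proposition \ref{SapphireDown1}; as there, they force $\det A = 1$ (writing $A=\left(\begin{smallmatrix}\alpha & 2\beta\\ 2\gamma & \alpha\end{smallmatrix}\right)$ with $\alpha$ odd, one has $\alpha^2-4\beta\gamma\equiv 1\pmod 4$, so $\det A=-1$ is impossible) and they guarantee $\beta,\gamma>0$.

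Next I would invoke Proposition \ref{SapphireDown1}: the system \eqref{sistema1} has exactly two solutions with all entries positive, namely the two matrices displayed there. Transporting back along the bijection $(r,s,t,u)\leftrightarrow(u,s,t,r)$, the system \eqref{sistema3} also has exactly two positive solutions, and the matrix $\left(\begin{smallmatrix} r & s\\ t & u\end{smallmatrix}\right)$ arising from a solution of \eqref{sistema3} is obtained from the corresponding matrix of Proposition \ref{SapphireDown1} by interchanging its two diagonal entries (the off-diagonal entries being unchanged). Substituting $b=2su$ for the role played by $2rs$ in \eqref{sistema1}, and $c=2rt$ for the role played by $2tu$, a direct check then identifies these two matrices with the two displayed in the statement, with $p_i$ running over the divisors of $(a+1)/2$ and $q_j$ over the divisors of $(a-1)/2$.

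Finally I would record, exactly as at the end of the proof of Proposition \ref{SapphireDown1}, that the two solution matrices are not equivalent under Morimoto's relation (Theorem \ref{morimoto}), so that $S_A$ is a double covering of precisely two non-homeomorphic $\varphi_3$-sapphires. Since the whole argument is just the proof of Proposition \ref{SapphireDown1} read through the substitution $r\leftrightarrow u$, there is no genuine obstacle here; the only points needing attention are keeping track of which set of primes, $\{p_i\}$ or $\{q_j\}$, attaches to each entry of $B$ after the diagonal entries have been swapped, and checking that the Morimoto-inequivalence of the two matrices survives this relabelling.
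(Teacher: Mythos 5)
Your proposal is correct and takes essentially the same route as the paper: the paper itself obtains Proposition \ref{SapphireDown3} from Proposition \ref{SapphireDown1} by exactly the substitution $r\leftrightarrow u$, which swaps the diagonal entries of the two solution matrices. Your extra bookkeeping (transporting the system \eqref{sistema3} to \eqref{sistema1}, matching the displayed matrices, and noting the Morimoto-inequivalence persists) simply makes that reduction explicit.
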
 

We note that if $\begin{pmatrix}
           x & y \\
           z & w
           \end{pmatrix}$ is one of the solutions presented in Proposition~\ref{SapphireDown1} then the other solution, in the same proposition is $\begin{pmatrix}
           y & x \\
           w & z
           \end{pmatrix}$ and the two solutions of Proposition~\ref{SapphireDown3} are $\begin{pmatrix}
           w & y \\
           z & x
           \end{pmatrix}$ and $\begin{pmatrix}
           z & x \\
           w & y
           \end{pmatrix}$. So if $x\neq w$ and $y\neq z$ only two of then appears in the table~\ref{tablesapphire}, in two different position of the table. In the case that $x=w$ or $y=z$ two of then appears in the table~\ref{tablesapphire} but one of then appears twice, in two different positions of the table.

We are now in a position to state the main result, which   classifies the equivalence classes of free involutions of a sapphire manifold, except in one case where we obtain a partial classification. As a Corollary we apply results from  J. Hillman, \cite{hillman} to determine which triples $(S_A, \tau, \R^3)$ have the  Borsuk-Ulam property (we already know that the Borsuk-Ulam property  holds for all triples $(S_A, \tau, \R^2)$ by Proposition~\ref{r2}). 

  It is   more convenient to state the main theorem without the condition that all entries of the matrix that determine the sapphire  manifold are positive. So we will not assume this hypothesis below. 

\begin{theorem}\label{main} Given a sapphire 3-manifold determined by the matrix

$$\begin{pmatrix}
           a & b \\
           c & d
           \end{pmatrix}$$
\noindent we have:

I) If $c$ is odd the manifold does not admit involutions.

II) If $c,b$ are even we have two cases:

II-a) $|a|\neq |d|$, then the manifold admits one class of free involutions, and the quotient is homeomorphic to the sapphire determined by the matrix
$$\begin{pmatrix}
           a & 2b \\
         c/2 & d
           \end{pmatrix}.$$

II-b) $|a|=|d|$, then the manifold admits 3 distinct classes of involution and the quotients are the sapphire manifolds given by the matrices:
$$\begin{pmatrix}
           a & 2b \\
         c/2 & d,
           \end{pmatrix},\qquad 
           \begin{pmatrix}
           r & s \\
           t & u,
           \end{pmatrix},\qquad
           \begin{pmatrix}
           s & r \\
           u & t
           \end{pmatrix},$$

\noindent where $(r,s,t,u)$ is one of the solutions given by Proposition \ref{SapphireDown1}, if  $r\ne u$ and $s\ne t$. Otherwise  the manifold admits at least three distinct classes of involutions and at most five.

III) If $c$ is even and $b$ odd, then the manifold admits one class of free involutions and the quotient is homeomorphic to the sapphire determined by the matrix
$$\begin{pmatrix}
           a & 2b \\
         c/2 & d
           \end{pmatrix}.$$

 \end{theorem}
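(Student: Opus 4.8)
The plan is to translate the classification of free involutions into the classification of double coverings carried out in Section~3, using the standard dictionary (free involution $\leftrightarrow$ index-two subgroup of $\pi_1$, i.e. epimorphism $\varphi\colon\pi_1(S_A)\to\Z_2$), together with the equivalence relation recalled at the start of Section~4. Since involutions correspond bijectively to the seven homomorphisms $\varphi_1,\dots,\varphi_7$ catalogued in the table on page~\pageref{tablesapphire}, and two involutions are equivalent exactly when the corresponding cohomology classes $x_\varphi\in H^1(S_A;\Z)$ are equivalent in the sense of Corollary~2.3 of~\cite{HaGoZv}, the whole theorem is essentially a bookkeeping exercise on top of the results already proved; the only genuinely new arithmetic input is Propositions~\ref{SapphireDown1} and~\ref{SapphireDown3}.

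I would organize the proof by the three cases of the theorem. For part~(I), the existence of an epimorphism $\pi_1(S_A)\to\Z_2$ forces, reading the presentation~\eqref{pi1sapphire}, that at least one of $\varphi(a),\varphi(b),\varphi(c)$ be nonzero; running through the relations $aba^{-1}b$, $c^2a^{-2r}b^{-s}$, $ca^{2t}b^uc^{-1}a^{2t}b^u$ one checks which assignments are compatible, and in every admissible case the parity constraints $s$ even (noted in Cases~II,~IV,~VI,~VII) and the structure of Cases~I and~III (where the covering is again a sapphire with the same first column entry) show $t=c/2\in\Z$, i.e.\ $c$ even; hence $c$ odd gives no epimorphism and no free involution. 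For part~(III), when $c$ is even and $b$ odd (equivalently $s$ odd), Case~II and its relatives are excluded by the $s$-even requirement, and Cases~I,~III are excluded because they need $|a|=|d|$ (the two matrices in Cases~I,~III of the table have equal diagonal entries $ru+st$); Case~V is also excluded since $b$ odd is incompatible with $\varphi_5$ by the same parity analysis. Thus only one homomorphism survives, and its quotient is the sapphire $\left(\begin{smallmatrix}a & 2b\\ c/2 & d\end{smallmatrix}\right)$ read off from the table, giving a single class.

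For part~(II), $b,c$ both even means $s$ even is available, so all of $\varphi_1,\dots,\varphi_7$ may a priori occur. By the preceding Proposition, $\{\varphi_2,\varphi_4,\varphi_6,\varphi_7\}$ is one equivalence class (quotient the sapphire $\left(\begin{smallmatrix}a & 2b\\ c/2 & d\end{smallmatrix}\right)$, or rather $\left(\begin{smallmatrix}r & s/2\\ 2t & u\end{smallmatrix}\right)$ in the table's own notation), $\{\varphi_5\}$ is another (but it exists only when $b$, equivalently $s$, has the right parity for $\varphi_5$ — here one must check $\varphi_5$ actually requires $s$ \emph{odd}, so in fact in case~II with $b$ even $\varphi_5$ does \emph{not} occur, leaving exactly the sapphire-quotient possibilities), and $\{\varphi_1\}$, $\{\varphi_3\}$ are the remaining candidates. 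Now invoke Proposition~\ref{SapphireDown1}: $\varphi_1$ exists precisely when $A$ can be written $\left(\begin{smallmatrix}\alpha & 2\beta\\ 2\gamma & \alpha\end{smallmatrix}\right)$ with $\alpha$ odd, i.e.\ exactly when $|a|=|d|$ (after the normalization $\det A=1$ forced by that form); similarly $\varphi_3$ by Proposition~\ref{SapphireDown3}. When $|a|\ne|d|$ neither $\varphi_1$ nor $\varphi_3$ exists, so only the $\{\varphi_2,\varphi_4,\varphi_6,\varphi_7\}$ class remains — that is~(II-a). When $|a|=|d|$ all three classes $\{\varphi_1\}$, $\{\varphi_3\}$, $\{\varphi_2,\dots\}$ are present; that $\{\varphi_1\}\ne\{\varphi_3\}$ when $r\ne u$ and $s\ne t$ is part~(3) of the preceding Proposition (the quotient matrices $\left(\begin{smallmatrix}r & s\\ t & u\end{smallmatrix}\right)$ and $\left(\begin{smallmatrix}s & r\\ u & t\end{smallmatrix}\right)$ obtained by swapping the two solutions of Proposition~\ref{SapphireDown1} are non-homeomorphic by Morimoto's Theorem~\ref{morimoto}), giving exactly three classes; and in the degenerate subcase $r=u$ or $s=t$ the Remark after that Proposition leaves open whether $\varphi_1\sim\varphi_3$, so one can only assert at least three and at most five classes (the extra ambiguity coming from the two further solutions of Proposition~\ref{SapphireDown3} possibly contributing new matrices when a coincidence forces one of them off the diagonal-swap orbit).

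The main obstacle is the degenerate subcase of~(II-b): establishing the \emph{exact} count of equivalence classes there would require resolving the question flagged in the Remark (whether $(S_A,\varphi_1)$ and $(S_A,\varphi_3)$ are equivalent when $r=u$ or $s=t$), which the paper explicitly leaves unanswered; accordingly the honest statement is only the bound ``between three and five,'' and the write-up should make clear that this imprecision is intrinsic and not a gap to be closed by more careful bookkeeping. The secondary technical point requiring care is the bijection between free involutions up to equivalence and the homomorphisms up to the $H^1$-equivalence of Corollary~2.3 in~\cite{HaGoZv}: one must note that the orbit spaces here are aspherical $3$-manifolds, so a homotopy equivalence of orbit spaces forces a homeomorphism, making the equivalence of involutions detectable via Morimoto's homeomorphism classification of the quotient sapphires — this is what lets the abstract equivalence of cohomology classes be replaced by the concrete comparison of $2\times2$ integer matrices used throughout.
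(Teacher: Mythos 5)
There is a genuine gap: your covering--space dictionary is used in the wrong direction, and the whole scaffolding of the argument inherits this error. The seven epimorphisms $\varphi_1,\dots,\varphi_7$ of Section~3 live on the fundamental group of the \emph{quotient}: a free involution $\tau$ on $S_A$ corresponds to a pair $(Q,\psi)$ where $Q=S_A/\tau$ and $\psi\colon\pi_1(Q)\to\Z_2$ is an epimorphism with kernel $\pi_1(S_A)$, so classifying involutions on $S_A$ means reading table~\ref{tablesapphire} \emph{backwards}: one must find every sapphire $Q$ and every case $i$ for which the table applied to $Q$ returns $S_A$ (this is exactly what the matrix $\left(\begin{smallmatrix}a & 2b\\ c/2 & d\end{smallmatrix}\right)$ and Propositions~\ref{SapphireDown1}, \ref{SapphireDown3} accomplish, combined with Proposition~\ref{equiv} for the count of classes). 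Your claimed bijection ``free involutions on $S_A$ $\leftrightarrow$ epimorphisms $\pi_1(S_A)\to\Z_2$'' is false both conceptually and numerically: epimorphisms out of $\pi_1(S_A)$ classify double \emph{covers} of $S_A$, and, for instance, when $c$ is odd and $b$ is even $\pi_1(S_A)$ admits all seven epimorphisms yet $S_A$ admits no free involution, while in case~III it admits three ($\varphi_1,\varphi_3,\varphi_5$ have no parity requirement) yet there is exactly one involution class.

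This inversion produces concretely wrong steps even where your conclusions match the statement. In Part~(I) you assert ``$c$ odd gives no epimorphism,'' which is false; the correct point is that every sapphire double cover listed in the table has even $(2,1)$ entry (namely $2tu$, $2t$ or $2rt$), this parity is invariant under Morimoto's moves, and the quotient of a sapphire by a free involution cannot be a torus bundle, so $S_A$ with $c$ odd covers nothing. In Part~(III) you exclude the $\varphi_2$-family ``by the $s$-even requirement'' and simultaneously conclude that the unique quotient is $\left(\begin{smallmatrix}a & 2b\\ c/2 & d\end{smallmatrix}\right)$ --- but that quotient \emph{is} the $\varphi_2$-family case, and the parity condition applies to the quotient's entry $2b$, which is automatically even; what $b$ odd actually excludes are quotients of type $\varphi_1$ and $\varphi_3$, whose covers have even $(1,2)$ entry. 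In Part~(II) your claim that $\varphi_5$ ``requires $s$ odd'' is likewise false; $\varphi_5$ carries no parity condition and is irrelevant only because its associated cover is a torus bundle, never a sapphire. Finally, your count in (II-b) attributes the three classes to $\{\varphi_1\},\{\varphi_3\},\{\varphi_2,\dots\}$ on $\pi_1(S_A)$ and invokes part~(3) of the proposition preceding Section~4, which concerns $\varphi_1$ versus $\varphi_3$ on a \emph{fixed} sapphire; what is actually needed is that the two solutions of Proposition~\ref{SapphireDown1} are non-homeomorphic quotient manifolds (the last line of its proof) and that the solutions of Proposition~\ref{SapphireDown3} represent the same quotient manifolds, which is where the $r\ne u$, $s\ne t$ hypothesis and the ``at least three, at most five'' caveat enter. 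The correct architecture is the paper's: invert the table, apply Propositions~\ref{equiv}, \ref{SapphireDown1} and \ref{SapphireDown3}, and count equivalence classes of pairs $(Q,\psi)$.
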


\begin{proof} First,  we note that given a matrix $A=\begin{pmatrix}
           a & b \\
           c & d
           \end{pmatrix}$ (with all entries positive) of a sapphire,  table~\ref{tablesapphire} describes 6 matrices of double coverings of $S_A$. 
           
           In fact   table~\ref{tablesapphire} can be constructed without the condition that all entries of the matrix be positive. In this case, if we apply the (new) table~\ref{tablesapphire} to another matrix $B$ such that $S_B$ is homeomorphic to $S_A$,  the 6 matrices obtained in this way 
           will be representatives of the same 6 manifolds (this  correspondence may change the position in the table).

Part I) follows easily from   table~\ref{tablesapphire}.

Part II) We just apply  table~\ref{tablesapphire}.  Then Proposition~\ref{equiv} (for II-a) and Propositions~\ref{equiv}, \ref{SapphireDown1} and \ref{SapphireDown3} (II-b).

Part III) Again we just use table~\ref{tablesapphire}. \qed

\end{proof}

\begin{corollary}\label{bumain} Given a sapphire 3-manifold $S_A$ determined by the matrix

$$\begin{pmatrix}
           a & b \\
           c & d
           \end{pmatrix}$$
\noindent the Borsuk-Ulam property holds for the triple $(S_A, \tau, \R^3)$ if and only if $c$ is even and $b$ is odd. Furthermore, in this case the manifold admits only one class of free involutions.  

 \end{corollary}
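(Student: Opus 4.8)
The plan is to reduce the Borsuk-Ulam question for $(S_A,\tau,\R^3)$ to an algebraic question about $\pi_1$ of the orbit space, and then read off the answer from Theorem~\ref{main}. First I would recall the general criterion: for a free involution $\tau$ on a closed $3$-manifold $M$ with orbit space $N=M/\tau$, the triple $(M,\tau,\R^3)$ satisfies the Borsuk-Ulam property if and only if the associated double cover $M\to N$ does not ``split through $\R^3$'' in the appropriate sense; by Hillman's recent result \cite{hillman} this is controlled by the structure of $\pi_1(N)$ together with the characteristic class in $H^1(N;\Z_2)$ classifying the cover. Concretely, Hillman's theorem gives a list of the $3$-manifold groups (or orbit-space types) for which the Borsuk-Ulam property fails, and one checks which sapphire orbit spaces appear on that list. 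Since we already know by Proposition~\ref{r2} that the property always holds for $n=2$, and for $n>3$ it holds trivially (independently of $\tau$), the content is entirely at $n=3$.

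Next I would combine this with the classification in Theorem~\ref{main}. By part I of that theorem, if $c$ is odd there is no free involution at all, so the statement is vacuously about the remaining cases. In the cases where a free involution exists ($b,c$ even, or $c$ even and $b$ odd), Theorem~\ref{main} identifies each orbit space explicitly as a sapphire Sol manifold $S_B$ with a specified gluing matrix $B$. So the task becomes: for each such $B$, decide whether $S_B$ (with the specified characteristic class, which corresponds to one of the homomorphisms $\varphi_i$ from the table) lies in Hillman's ``failure'' list. I would organize this as a case analysis following exactly the subdivision II-a, II-b, III of Theorem~\ref{main}. In cases II-a and II-b the orbit-space matrices that arise (the matrix $\begin{pmatrix} a & 2b\\ c/2 & d\end{pmatrix}$ and, in II-b, the matrices coming from Propositions~\ref{SapphireDown1} and \ref{SapphireDown3}) all have the property that $S_A\to S_B$ is again a cover by a sapphire with entries of the same parity type, and one shows via Hillman's criterion that the Borsuk-Ulam property fails precisely when the cover is of the ``$\varphi_2$-type'' (equivalently, the second column has the parities realized in case III). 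In case III, where $b$ is odd, the orbit space $S_B$ has first homology $\Z_{4t}+\Z_4$ rather than $\Z_{4t}+\Z_2+\Z_2$, and this homological distinction is what pushes $S_B$ out of Hillman's failure list, so the property holds. The ``furthermore'' clause is then immediate: Theorem~\ref{main} part III already asserts that in the case $b$ odd, $c$ even, there is exactly one equivalence class of free involution.

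The main obstacle will be the precise invocation of Hillman's result: one must match the hypotheses of \cite{hillman} (which is stated for general $3$-manifolds and a given involution, in terms of algebraic data such as the index-two subgroup and the relevant cohomology class) with the very concrete presentations \eqref{pi1sapphire} of the sapphire groups and the explicit characteristic classes $\varphi_i$ computed in Section~3. In particular I would need to verify that for the orbit spaces arising in II-a, II-b the relevant obstruction class (the image of the fundamental class, or equivalently the vanishing of a certain degree-one map to $\R^3$) is nonzero exactly when the covering matrix has even second column, and that the homology computation $H_1(S_B)\cong \Z_{4t}+\Z_4$ in case III genuinely forces the property to hold rather than merely being consistent with it. Once that translation is pinned down, the rest is bookkeeping against Theorem~\ref{main}. \qed
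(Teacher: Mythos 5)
You follow the same general route as the paper (combine Theorem~\ref{main} with Hillman's computation), but the decisive step is left as a ``translation to be pinned down,'' and the substitute reasoning you sketch for it is wrong. Hillman's paper is not a ``failure list'' of orbit-space types: it computes the $\mathbb{F}_2$-cohomology \emph{rings} of Sol $3$-manifolds. The Borsuk--Ulam input is the standard criterion (as in \cite{HaGoZv}, \cite{BauHaGoZv1}) that $(S_A,\tau,\R^3)$ has the Borsuk--Ulam property if and only if the characteristic class $x\in H^1(S_A/\tau;\Z_2)$ of the double cover satisfies $x^3\neq 0$. Your proposal never formulates this cup-cube criterion, so what you call the ``main obstacle'' is in fact the entire content of the proof: for each quotient $S_B$ and each of the seven classes $\varphi_i$ one must decide whether $\varphi_i^3\neq 0$, and Hillman (Section~5, item~11) shows this happens exactly for the classes with $\varphi(b)=\overline{1}$ (those not factoring through $\pi/\square(\pi)$, i.e.\ $\varphi_2,\varphi_4,\varphi_6,\varphi_7$) and with the $(1,2)$-entry of the base matrix divisible by $2$ but not by $4$.

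Moreover, the two concrete claims in your case analysis do not hold. In cases II-a/II-b you assert that the property ``fails precisely when the cover is of the $\varphi_2$-type''; in fact it fails for \emph{all} quotients arising there: for the $\varphi_1$- and $\varphi_3$-type quotients of II-b (classes killing $b$, hence trivial cube), and also for the $\varphi_2$-type quotient $\begin{pmatrix} a & 2b\\ c/2 & d\end{pmatrix}$, because with $b$ even its $(1,2)$-entry $2b$ is divisible by $4$. In case III your proposed mechanism --- that the orbit space has $H_1\cong\Z_{4t}+\Z_4$ --- is false: the orbit space is again $\begin{pmatrix} a & 2b\\ c/2 & d\end{pmatrix}$, whose $(1,2)$-entry $2b$ is even, so by Morimoto its $H_1$ is $\Z_{4t'}+\Z_2+\Z_2$, the same isomorphism type as in case II. The distinction that makes the cube nonzero is not the isomorphism type of $H_1$ but the finer condition $2b\equiv 2\pmod 4$ (equivalently, $b$ odd upstairs), which is exactly what Hillman's ring computation detects. (A side remark: for $n>3$ the Borsuk--Ulam property \emph{fails} for every free involution on a $3$-manifold; it does not ``hold trivially.'') The ``furthermore'' clause, read off from Theorem~\ref{main} III, is fine.
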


\begin{proof} If $c$ is odd there is no involution. For the other parts, it will be important to describe, among the 7 homomorphisms, those which factor through $\pi/\square(\pi)$. They are precisely those where $b$ is sent to zero. So $\varphi_2$,  $\varphi_4$, $\varphi_6$ and  $\varphi_7$ are those which do not factor though  $\pi/\square(\pi)$, so the cube of the associated cohomology class is nontrivial precisely when the element in position (1,2) is divisible by 2 but not by 4.

Apply \cite{hillman} section 5 item 11,  that the cube of cohomology class associated to $\varphi_i $ is nontrivial only in the Part III of Theorem~\ref{main}. \qed

\end{proof}

\section*{acknowledgement}

The second  author would like to think Michel Boileau for some fruitful discussions at an early stage of this work.

\end{document}